\theoremstyle{definition}
\newtheorem{ex}{Example}[section]
\newtheorem*{ex*}{Example}
\newtheorem*{DMM*}{Dynamical Manin-Mumford Problem}
\newtheorem*{question*}{Question}
\theoremstyle{plain}
\newtheorem{thm}{Theorem}[section]
\newtheorem{prop}[thm]{Proposition}
\newtheorem*{cor*}{Corollary}
\newtheorem{cor}[thm]{Corollary}
\newtheorem{lem}[thm]{Lemma}
\newtheorem*{claim}{Claim}
\newtheorem*{thm*}{Theorem}
\newtheorem{conj}{Conjecture}[section]
\newtheorem*{thmA}{Theorem A}
\newtheorem*{thmB}{Theorem B}
\newtheorem*{thmC}{Theorem C}
\theoremstyle{remark}
\newtheorem*{remark}{Remark}
\numberwithin{equation}{section}
\renewcommand{\div}{\mathrm{div}}
\newcommand{\vect}[1]{\mathbf{#1}}
\DeclareMathOperator{\Spec}{Spec}
\DeclareMathOperator{\supp}{supp}
\DeclareMathOperator{\rank}{rank}
\DeclareMathOperator{\tor}{tor}
\DeclareMathOperator{\Hom}{Hom}
\DeclareMathOperator{\Span}{span}
\DeclareMathOperator{\Star}{Star}
\DeclareMathOperator{\ad}{ad}
\DeclareMathOperator{\row}{Row}
\DeclareMathOperator{\Mat}{Mat}
\DeclareMathOperator{\PrePer}{PrePer}
\def\C{\mathbb{C}}
\def\Q{\mathbb{Q}}
\def\P{\mathbb{P}}
\def\R{\mathbb{R}}
\def\Z{\mathbb{Z}}
\def\P{\mathbb{P}}
\def\PP{\mathcal{P}}
\def\K{{\mathbb{K}}}
\def\N{\mathbb{N}}
\def\T{\mathbb{T}}
\def\G{\mathbb{G}}
\def\v{\mathbf{v}}
\def\cL{\mathcal{L}}
\def\m{\mathrm{m}}
\def\bb{\vect{b}}
\def\aa{\vect{a}}
\def\xx{P}
\def\bmu{{\pmb{\mu}}}
\def\Qbar{\bar{\Q}}
\def\fan{\Delta}
\def\n0{{\bf n_0}}
\def\one{\mathbf{1}}
\def\Homag{\Hom_{\rm{alg.gp.}}}
\providecommand{\norm}[1]{\lVert#1\rVert}
\def\transp #1{\vphantom{#1}^{\mathrm t}\! {#1}}
\def\maxp{\mathrm{max}^+}
\begin{document}

\title[Arithmetic Dynamics of Monomial Maps]
{On the Arithmetic Dynamics of Monomial Maps}

\author{Jan-Li Lin}

\address{Mathematics Department, Northwestern University, Evanston \\ IL 60208 \\ USA}

\email{janlin@math.northwestern.edu}

\subjclass{}

\keywords{}

\begin{abstract}
We generalized several results for the arithmetic dynamics of monomial maps, including Silverman's conjectures on height growth, dynamical Mordell-Lang conjecture, and dynamical Manin-Mumford conjecture. These results are originally known for monomial maps on algebraic tori. We extend the results to arbitrary toric varieties.
\end{abstract}

\maketitle


\section{Introduction}

Suppose that $X$ is an $n$-dimensional complex normal projective variety and $\varphi:X\dashrightarrow X$ is a dominant rational selfmap on $X$, defined over $\Qbar$. Given an ample divisor $D$ of $X$, one define the degree of $\varphi$ with respect to $D$ as $\deg_D(\varphi):=\varphi^*D\cdot D^{n-1}$. The asymptotic growth of the degree under iterates of $\varphi$ is measured by the {\em dynamical degree} 
\[
\delta(\varphi):=\lim_{k\to\infty} \deg_D(\varphi^k)^{1/k}.
\]
The dynamical degree is independent of the choice of $D$ (see \cite[Theorem 1]{Dang}) and is invariant under birational conjugation.

From the arithmetic point of view, using the Weil height machine, we have the height function $h_{X,D}$ on $X(\Qbar)$ associated to $D$. Let $X_\varphi(\Qbar)$ denote the set of points $P\in X(\Qbar)$ whose forward $\varphi$-orbit is well-defined. For $P\in X_\varphi(\Qbar)$, one can define the {\em arithmetic degree} of $\varphi$ at $P$ as
\[
\alpha_\varphi(P)=\lim_{k\to\infty}h_{X,D}^+(\varphi^k(P))^{1/k}
\]
provided that the limit exists, where we set $h_{X,D}^+(P):=\max\{h_{X,D}(P),1\}$. The arithmetic degree is also independent of the choice of $D$ (see \cite[Remark 39]{Sil}).

The relationship between $\delta(\varphi)$ and $\alpha_\varphi(P)$ is an interesting topic. In fact, in the papers \cite{KaSi, Sil}, the authors made several conjectures, as follows.

\begin{conj}
\label{main_conjectures}
Let $P\in X_\varphi(\Qbar)$.
\begin{enumerate}
\item The limit defining $\alpha_\varphi(P)$ exists. 
\item $\alpha_\varphi(P)$ is an algebraic integer.
\item The set of arithmetic degrees $\{\alpha_\varphi(Q)\ |\ Q\in X_\varphi(\Qbar)\}$ is finite.
\item If the forward $\varphi$-orbit of $P$ is Zariski dense in $X$, then $\alpha_\varphi(P)=\delta(\varphi)$.
\end{enumerate}
\end{conj}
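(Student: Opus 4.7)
The plan is to reduce Conjecture~\ref{main_conjectures} for a monomial map $\varphi_A$ on a projective toric variety $X$ to the already-known analogue on the open algebraic torus $\G_m^n\subset X$, by induction on $\dim X$ using the orbit--cone correspondence. The key structural facts I would exploit are that $\varphi_A$ is defined everywhere on $\G_m^n$ and preserves it, that $\varphi_A$ sends each toric stratum into another toric stratum wherever it is defined, and that the dynamical degree $\delta(\varphi_A)$ equals the spectral radius of the integer matrix $A$ and is birationally invariant. The latter gives the freedom to pass to any convenient $\varphi_A$-equivariant toric modification whenever that simplifies matters.

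\textbf{The torus case.} For $P\in\G_m^n(\Qbar)$ the entire forward orbit lies in the torus, so I would compare the toric-variety height $h_{X,D}$ with a standard Weil height on the torus (for instance, via a toric morphism into a product of projective spaces). On each orbit, the two heights agree up to a bounded quantity, so the limit defining $\alpha_{\varphi_A}(P)$ exists and parts (1)--(4) of the conjecture follow from the torus-case results referenced in the introduction.

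\textbf{Boundary points via induction.} The substantive case is $P\in X\setminus\G_m^n$. Here the well-defined forward orbit is confined to some $\varphi_A$-invariant union of toric strata. By the orbit--cone correspondence, the closure $\overline{O_\sigma}$ of any such stratum is itself a toric variety of strictly smaller dimension, on which the restriction of $\varphi_A$ (where defined) is again a monomial map, governed by the induced action of $A$ on a quotient of the cocharacter lattice and possibly twisted by a character. The inductive hypothesis on $\dim X$ then supplies (1)--(4) on that smaller toric variety. I expect the main technical obstacle to be verifying that the forward orbit, although a priori free to hop among several boundary strata, is eventually trapped in a single one; this requires a careful combinatorial analysis of how $A$ interacts with the fan $\Delta$, combined with the standing hypothesis $P\in X_\varphi(\Qbar)$ that rules out indeterminacy along the orbit.

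\textbf{Assembling the four conclusions.} Once the stratified analysis is in place, the conjectures follow one by one: (1) the limit exists on the eventual stratum and hence globally; (2) its value is the spectral radius of a sub-block of $A$ corresponding to that stratum, so it is an algebraic integer; (3) there are only finitely many cones in $\Delta$, hence only finitely many such sub-blocks occur, yielding a finite set of possible arithmetic degrees; and (4) a Zariski-dense forward orbit in $X$ cannot be contained in the proper closed subvariety $X\setminus\G_m^n$, so $P$ must lie in the open torus, where the torus-case conclusion $\alpha_{\varphi_A}(P)=\delta(\varphi_A)$ applies directly.
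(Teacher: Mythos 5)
Your overall architecture (reduce to the torus via the orbit--cone correspondence, compare heights across toric compactifications, handle boundary points by restricting $\varphi_A$ to strata) matches the paper's, but there is a genuine gap at the central step for boundary points. The forward orbit of $P\in X_\varphi(\Qbar)\setminus\G_m^n$ is \emph{not} eventually trapped in a single stratum: by the pigeonhole principle it is eventually trapped in a \emph{cycle} of strata $O_0\to O_1\to\cdots\to O_{p-1}\to O_0$ of some period $p>1$ in general (see Example 2.1, where $\tau_1\to\tau_2\to\tau_1$). Consequently the sequence $h(\varphi^k(P))^{1/k}$ splits into $p$ subsequences converging respectively to $\alpha_{\varphi^p}(\varphi^l(P))^{1/p}$ for $l=0,\dots,p-1$, where $\varphi^p|_{O_l}$ is a monomial self-map of the torus $O_l$; to get conclusion (1) you must prove these $p$ values coincide, and your proposal supplies no mechanism for that. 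The paper's resolution is a semiconjugacy claim: if $\varphi_B\circ\varphi_C=\varphi_C\circ\varphi_A$ for dominant monomial maps of tori, then $\alpha_{\varphi_A}(P)=\alpha_{\varphi_B}(\varphi_C(P))$, proved by showing $\varphi_C$ carries the smallest algebraic subgroup containing the $\varphi_A$-orbit of $P$ onto the corresponding subgroup for $\varphi_C(P)$ and then conjugating the associated integer matrices over $\Q$. Applying this with $\varphi_C=\varphi|_{O_l}$ equates the subsequential limits.

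Two smaller points. First, your identification of the resulting value as ``the spectral radius of a sub-block of $A$'' is imprecise: what one gets directly is $\rho(\overline{A^p}|_{N(\sigma_0)})^{1/p}$, where $N_{\sigma_0}$ is invariant under $A^p$ but not under $A$; this suffices for (2) (a $p$-th root of the spectral radius of an integer matrix is an algebraic integer) and for (3) (finitely many cones), but relating it to eigenvalues of $A$ itself, as the paper's Theorem A does, requires the additional linear-algebra lemma $\rho(T^p|_W)=\rho(T|_V)^p$ with $V=\mathrm{span}(W,TW,\dots,T^{p-1}W)$. Second, the restriction of $\varphi_A$ to an orbit closure is a genuine monomial map induced by $\bar A$ on the quotient lattice --- there is no character twist. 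Your treatment of the torus case and of part (4) is correct and agrees with the paper.
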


In the paper \cite{Sil}, Silverman proved the conjectures for monomial maps on an algebraic torus. 
Recall that, given an $n\times n$ integer matrix $A=(a_{ij})$, the monomial (self)map associated to $A$ is a is the map $\varphi_A: (\C^*)^n \to (\C^*)^n$ defined by 
\[
\varphi_A(x_1,\cdots,x_n) =
            (x_1^{a_{11}} x_2^{a_{12}} \cdots x_n^{a_{1n}},\cdots,
            x_1^{a_{n1}} x_2^{a_{n2}} \cdots x_n^{a_{nn}}).
\]
$\varphi_A$ is a group endomorphism when we identify $(\C^*)^n$ with the multiplicative algebraic group $\G_\m^n(\C)$. It is surjective if $\det(A)\ne 0$. Moreover, $\varphi_A$ extends to an equivariant rational selfmap on any toric varieties $X$ which contain $(\C^*)^n$ as a dense open subset. A goal of this paper is to extend Silverman's results to toric varieties. More precisely, we prove the following.

\begin{thmA}
\label{thm:main_1}
Let $\varphi=\varphi_A:X\dashrightarrow X$ be a monomial map defined on a toric variety $X=X(\fan)$ by an $n\times n$ matrix $A$ with $\det(A)\ne 0$. Suppose that $f_1(t),\cdots, f_s(t)\in\Z[t]$ be the monic irreducible factors of the characteristic polynomial $\det(A-t I)$ of $A$. Then $\alpha_\varphi(P)$ exist for all $P\in X(\Qbar)_\varphi$, and
\[
\{\alpha_\varphi(P)\ |\ P\in X(\Qbar)_\varphi\} = \{ 1, \rho(f_1),\cdots,\rho(f_s)\},
\]
where for a polynomial $f(t)\in\C[t]$, 
$\rho(f):=\max\{\,|\alpha|\, :\, \alpha\text{ is a root of $f$}\}$.
\end{thmA}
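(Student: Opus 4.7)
The plan is to reduce Theorem A to Silverman's result on monomial maps on the torus by restricting the dynamics to a single torus orbit of $X$. Recall that $X = X(\fan)$ decomposes as a disjoint union $\bigsqcup_{\sigma \in \fan} O_\sigma$ of torus orbits, and that $\varphi_A$ permutes these orbits according to the $A$-action on $N_\R$: whenever $\varphi_A$ is defined on $O_\sigma$, it maps $O_\sigma$ into $O_\tau$, where $\tau$ is the smallest cone of $\fan$ containing $A(\mathrm{relint}(\sigma))$. For $P \in X_\varphi(\Qbar)$, let $\sigma_k \in \fan$ be the unique cone with $\varphi^k(P) \in O_{\sigma_k}$. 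Since $\fan$ is finite and $\sigma_{k+1}$ depends only on $\sigma_k$, the sequence $(\sigma_k)$ is eventually periodic of some period $m$; replacing $P$ by a high iterate, I assume it is purely periodic, so $\varphi^m$ sends $O_{\sigma_0} =: O_\tau$ into itself. Then $V_\tau := \Span(\tau) \subseteq N_\R$ is $A^m$-invariant, and since $\det A \ne 0$, the induced integer matrix $\bar{A}^m$ on the quotient cocharacter lattice $N/(V_\tau \cap N)$ is invertible. The restriction $\varphi^m|_{O_\tau}$ is then the monomial map $\varphi_{\bar{A}^m}$ on the sub-torus $O_\tau \cong \G_m^{n - \dim \tau}$.

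By functoriality of the Weil height machine, $h_{X,D}$ restricted to $O_\tau(\Qbar)$ differs from a Weil height on $O_\tau$ by $O(1)$, so the arithmetic degrees of $P$ for $\varphi^m$ on $X$ and for $\varphi_{\bar{A}^m}$ on $O_\tau$ coincide. Silverman's theorem applied to $\varphi_{\bar{A}^m}$ therefore gives that $\alpha_{\varphi^m}(P)$ exists and equals $1$ or $\rho(g_j)$ for some monic irreducible factor $g_j$ of $\det(\bar{A}^m - tI)$. Since this polynomial divides $\det(A^m - tI)$, each eigenvalue of $\bar{A}^m$ has the form $\lambda^m$ for some eigenvalue $\lambda$ of $A$, with $\lambda$ a root of a unique $f_i$. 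Because the absolute Galois group permutes the roots of $f_i$, the minimal polynomial $g_j$ of $\lambda^m$ satisfies
\[
\rho(g_j) = \max_{\eta \in \mathrm{Gal}(\Qbar/\Q)} |\eta(\lambda)|^m = \rho(f_i)^m.
\]
Hence $\alpha_{\varphi^m}(P) \in \{1\} \cup \{\rho(f_i)^m : 1 \le i \le s\}$.

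It remains to pass from $\varphi^m$ back to $\varphi$ and to verify achievability. A standard two-sided estimate of the shape $h_{X,D}^+(\varphi^r(Q)) \leq C_r \cdot h_{X,D}^+(Q) + C_r'$ (valid for $0 \leq r < m$ and $Q$ in the relevant orbit, and invertible up to a bounded error using that $\varphi^{m-r}$ is a rational map defined along the orbit) forces the $k$-th roots $h_{X,D}^+(\varphi^k(P))^{1/k}$ along each residue class $k \equiv r \pmod m$ to converge to the common value $\alpha_{\varphi^m}(P)^{1/m}$, so $\alpha_\varphi(P)$ exists and lies in $\{1, \rho(f_1), \ldots, \rho(f_s)\}$. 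The reverse inclusion is immediate from Silverman's theorem applied on the open torus $\G_m^n \subseteq X_\varphi(\Qbar)$, which already realizes every element of this set.

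I expect the main technical obstacle to be the verification in the first paragraph that the restriction $\varphi^m|_{O_\tau}$ is indeed the monomial map associated to the quotient matrix $\bar{A}^m$. This requires a careful dictionary between the $A$-action on cones of $\fan$ and the orbit geometry of $X(\fan)$, including the saturation of $V_\tau \cap N$ in $N$ and the identification of the induced map on $O_\tau$ with $\varphi_{\bar{A}^m}$ under the torus isomorphism $O_\tau \cong \G_m^{n - \dim \tau}$. The Galois argument relating $\rho(g_j)$ to $\rho(f_i)$ and the height comparison above, while non-trivial, are more routine once the geometric setup is in place.
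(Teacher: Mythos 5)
Your proposal is correct, and its skeleton (reduce to an eventually periodic cycle of torus orbits, identify $\varphi^m|_{O_\tau}$ with $\varphi_{\bar A^m}$, invoke Silverman's Corollary~32 on that torus, then splice the residue classes mod $m$ back together) is the same as the paper's. You replace the paper's two key internal lemmas by different arguments, and both replacements work. First, to equate the limits along the $m$ residue classes, the paper proves a semiconjugacy claim: if $\varphi_B\circ\varphi_C=\varphi_C\circ\varphi_A$ on tori then $\alpha_{\varphi_A}(P)=\alpha_{\varphi_B}(\varphi_C(P))$, proved by tracking the smallest invariant algebraic subgroup containing the orbit and conjugating the associated integer matrices over $\Q$. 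You instead use the two-sided Kawaguchi--Silverman height inequalities $h^+(\varphi^r(Q))\le C_r h^+(Q)+C_r'$ together with the reverse estimate obtained by applying the same bound to $\varphi^{m-r}$; this sandwiches each subsequence between the $r=0$ limit and is the more standard (and arguably more elementary) route. Second, to identify $\alpha_{\varphi^m}(P)$ with $\rho(f_i)^m$, the paper proves a linear-algebra lemma, $\rho(T^p|_W)=\rho(T|_V)^p$ for $W$ only $T^p$-invariant and $V$ its $T$-span. You instead observe that $\det(\bar A^m-tI)$ divides $\det(A^m-tI)$, that every eigenvalue of $A^m$ is $\lambda^m$ for an eigenvalue $\lambda$ of $A$, and that the roots of the minimal polynomial $g_j$ of $\lambda^m$ are exactly the $m$-th powers of the roots of $f_i$, whence $\rho(g_j)=\rho(f_i)^m$; this Galois argument is clean and avoids the operator-norm estimates of the paper's lemma. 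One small inaccuracy: the restriction of $h_{X,D}$ to $O_\tau(\Qbar)$ does \emph{not} in general differ by $O(1)$ from an arbitrary Weil height on a toric compactification of $O_\tau$; two compactifications only give heights satisfying $C_1h_2+O(1)\le h_1\le C_2h_2+O(1)$ (this is exactly the paper's Lemma~\ref{lem_indep_of_embed}). Multiplicative comparability is all you need for $k$-th-root limits, so the conclusion is unaffected, but you should state the comparison in that weaker form.
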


Notice that Theorem~\ref{thm:main_1} implies parts (1), (2) and (3) of Conjecture~\ref{main_conjectures}. Part (4) is a consequence of the description of orbits under monomial maps in Section 2.7. Also noticed that Silverman proved the result for $P\in\G^n_\m(\Qbar)$ in \cite[Corollary 32]{Sil}. The improvement of the current paper is to extend the result to points (with well-defined $\varphi$-orbits) on an arbitrary toric variety. This extension contains some meaningful cases. For example, monomial polynomial maps defined on $\C^n$, not just $(\C^*)^n$, hence for these maps, such generalization is natural and even necessary.

The main tool we use to prove the theorem is the orbit-cone correspondence for toric variety and a careful analysis of how the orbits behave under monomial maps. As another consequence, we can also extend the dynamical Mordell-Lang conjecture to monomial maps on arbitrary projective toric varieties. The dynamical Mordell-Lang conjecture predicts, for an endomorphism $\varphi$ on a quasiprojective variety $X$, how the $\varphi$-orbit of a point $P\in X$ intersects a subvariety $Y\subset X$. More precisely, the conjectures states that the set $\{ k\in \N\ |\ \varphi^k(P)\in Y\}$ is a finite union of arithmetic progressions. The conjecture is proven for many classes of maps, e.g., \'{e}tale maps on quasiprojective varieties~\cite{BGT}, plane polynomial maps~\cite{Xie}. See the book \cite{BGT_book} and the references therein for more information.

\begin{thmB}
Let $X=X(\fan)$ be a toric variety, $\varphi=\varphi_A$ be a monomial map, and $Y\subset X$ be a closed subvariety of $X$. Then for all $P\in X_\varphi$, the sequence
\[
\{ k\in \N\ |\ \varphi^k(P)\in Y\}
\]
 is a finite union of arithmetic progressions.
\end{thmB}

Another goal of this paper is to describe the set of preperiodic points for monomial maps on toric varieties. This is also done in \cite{Sil} for $\G^n_\m$ for a major case. We first generalize to all monomial maps for $\G^n_\m$, then again we extend to toric varieties. An important motivation to study the preperiodic points is the dynamical Manin-Mumford problem.

\begin{DMM*}
Suppose that $X$ is a quasiprojective variety and $\varphi:X\to X$ is a dominant endomorphism. Describe all positive-dimensional irreducible subvarieties $Y\subset X$ such that the set of preperiodic points of $\varphi$ contained in $Y$ is Zariski dense in $Y$.
\end{DMM*}

There are recent results for this problem for plane polynomial automorphisms~\cite{DF} and for split rational maps~\cite{GNY}.
To answer the problem for monomial maps, we obtain the following characterization for closed subvarieties of toric varieties containing Zariski dense set of preperiodic points for a monomial map.  
For notations and other details, see Section~\ref{section:DMM}.

\begin{thmC}
Let $\varphi:X \dashrightarrow X$ be a monomial selfmap on a toric variety $X$. If we write $X_\varphi$ as a disjoint union of $\T$-orbits $X_\varphi = \bigsqcup O_\sigma$, and write the set of preperiodic points of $\varphi$ as $\PrePer(\varphi)=\bigsqcup G_\sigma^\div$ for connected algebraic subgroups $G_\sigma$ of $O_\sigma$ (see Proposition~\ref{prop:preper_points_mono_maps} for the description of $\PrePer(\varphi)$). 

For a closed irreducible subvariety $Y\subset X_\varphi$, let $\sigma$ be the unique cone such that $Y\cap O_\sigma$ is Zariski dense in $Y$, and let $\pi: O_\sigma\to O_\sigma / G_\sigma$ be the quotient map. Then the preperiodic points of $\varphi$ is Zariski dense in $Y$ if and only if $\pi(Y\cap O_\sigma)$ is a torsion translate of an algebraic subgroup of $O_\sigma$.
\end{thmC}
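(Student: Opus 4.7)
The plan is to reduce the statement to an application of the Manin--Mumford theorem for algebraic tori (Laurent's theorem) on the quotient torus $O_\sigma/G_\sigma$. Set $Y^\circ := Y\cap O_\sigma$; by hypothesis this is a Zariski dense open subvariety of the irreducible $Y$, so density of $\PrePer(\varphi)\cap Y$ in $Y$ is equivalent to density of $\PrePer(\varphi)\cap Y^\circ$ in $Y^\circ$. From Proposition~\ref{prop:preper_points_mono_maps} we have $\PrePer(\varphi)\cap O_\sigma = G_\sigma^\div$, which I interpret (and would verify at the start) as the preimage $\pi^{-1}\bigl((O_\sigma/G_\sigma)_{\tor}\bigr)$ under the quotient map. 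Let $Z$ be the Zariski closure of $\pi(Y^\circ)$ in $O_\sigma/G_\sigma$, so that $\pi|_{Y^\circ}\colon Y^\circ \to Z$ is a dominant morphism of irreducible varieties, and the preperiodic points of $\varphi$ in $Y^\circ$ are exactly $Y^\circ \cap \pi^{-1}(Z_{\tor})$, where $Z_{\tor} := Z \cap (O_\sigma/G_\sigma)_{\tor}$.

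Next I would prove that density of $\PrePer(\varphi)\cap Y^\circ$ in $Y^\circ$ is equivalent to density of $Z_{\tor}$ in $Z$. For the only-if direction, given any proper closed subset $W \subsetneq Z$, the set $\pi^{-1}(W) \cap Y^\circ$ is proper closed in $Y^\circ$ (because $\pi|_{Y^\circ}$ is dominant), so it cannot contain the assumed dense set of preperiodic points; pushing a preperiodic point that avoids $\pi^{-1}(W)$ forward via $\pi$ shows $Z_{\tor} \not\subset W$, hence $\overline{Z_{\tor}} = Z$. For the if direction, assume $Z_{\tor}$ is dense in $Z$; for any non-empty open subset $U \subset Y^\circ$, Chevalley's theorem guarantees that $\pi(U)$ is constructible and contains a dense open subset of $Z$, so $\pi(U)$ must meet $Z_{\tor}$, and the preimage of a meeting point is a preperiodic point of $\varphi$ lying in $U$. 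With this equivalence established, Laurent's theorem on the torus $O_\sigma/G_\sigma$ finishes the proof: $Z_{\tor}$ is dense in the irreducible variety $Z$ if and only if $Z$ is a torsion translate of an algebraic subgroup of $O_\sigma/G_\sigma$.

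The main conceptual ingredient — and the place I expect the real content to lie — is the structural description of $\PrePer(\varphi)\cap O_\sigma$ as $G_\sigma^\div = \pi^{-1}\bigl((O_\sigma/G_\sigma)_{\tor}\bigr)$ furnished by Proposition~\ref{prop:preper_points_mono_maps}, combined with the orbit-cone analysis from Section 2.7 that lets one work one orbit at a time. Once these are in place, the present theorem follows from a clean comparison between $Y^\circ$ and its image $Z$ under a dominant morphism, together with the classical Manin--Mumford theorem for algebraic tori; no additional dynamics beyond the identification of preperiodic loci is needed.
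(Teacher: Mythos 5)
Your proof is correct and follows essentially the same route as the paper: restrict to the dense open piece $Y\cap O_\sigma$, identify $\PrePer(\varphi)\cap O_\sigma=G_\sigma^{\div}$ with $\pi^{-1}$ of the torsion subgroup of the quotient torus, and apply Laurent's theorem to (the closure of) $\pi(Y\cap O_\sigma)$. The only divergence is in the implication ``torsion dense in the image $\Rightarrow$ preperiodic points dense in $Y$'': the paper compares fiber dimensions $\dim Y_Q$ versus $\dim Z_Q$ over generic and torsion points $Q$, whereas you use Chevalley's theorem to see that the image of any nonempty open subset of $Y\cap O_\sigma$ contains a dense open subset of $Z$ and hence meets the torsion --- an equally valid and arguably cleaner step, which also treats more carefully the point that $\pi(Y\cap O_\sigma)$ is a priori only constructible rather than closed.
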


The rest of the paper is as follows. In Section~\ref{section:background}, we recall results about algebraic tori and toric varieties, and develop necessary background for the later sections. The dynamical Mordell-Lang theorem is discussed in Section~\ref{section:DML}. Height growth is studied in Section~\ref{section:height_growth}. Finally, we describe the preperiodic points of monomial maps and the dynamical Manin-Mumford problem in Section~\ref{section:preperiodic_points}.


\section{Algebraic Tori, Toric Varieties and Monomial Maps}
\label{section:background}

In this section we recap the basic definitions, notations and properties of toric varieties. For more details see the standard text by Fulton~\cite{Fulton}. 

\subsection{Algebraic tori}

Let $\T\cong\G_\m^n(\C)$ be the algebraic torus of dimension $n$. There are two lattices associated with $\T$: the lattice of one-parameter subgroups $N:=\Homag(\C^*,\T)$, and the lattice of characters $M:=\Homag(\T,\C^*)$. Both $M$ and $N$ are isomorphic to $\Z^n$, as follows. 
For $\aa=(a_1,\cdots,a_n)\in \Z^n$, the one-parameter subgroup associated to $\aa$, 
denoted by $\lambda_\aa:\C^*\to\T$, is given by 
$\lambda_\aa(x)= (x^{a_1},\cdots,x^{a_n})$.
For $\bb=(b_1,\cdots,b_n)\in \Z^n$, the character associated to $\bb$,
denoted by $\chi^\bb:\T\to\C^*$, is given by
$\chi^\bb(x_1,\cdots,x_n) = x_1^{b_1}\cdots x_n^{b_n}$.
There is a natural pairing between $N$ and $M$ which makes them dual to each other, i.e., $M\cong N^\vee$ and $N\cong M^\vee$. An alternative way to describe $\T$ is as $\T=\Spec \C[M]$, where $\C[M]$ denotes the group ring associated to $M$.

We introduce the following convenient multi-index notations. For $x\in\C^*$ and $\aa=(a_1,\cdots,a_n)\in \Z^n$, we denote $x^\aa:= (x^{a_1},\cdots,x^{a_n})$. For $\xx=(x_1,\cdots,x_n)\in\T$ and $\bb=(b_1,\cdots,b_n)\in \Z^n$, we denote $\xx^\bb := x_1^{b_1}\cdots x_n^{b_n}$.
Recall that, from the group structure on $\T$, we also have $\xx^k = (x_1^k,\cdots, x_n^k)$ for $k\in\Z$ . Thus, we have the equations $\xx^{k\bb}=(\xx^k)^\bb=(\xx^\bb)^k$.

\subsection{Algebraic subgroups of a torus}


A sublattice $L\subset M$ is {\em saturated} if $L=\Span_\R(L)\cap M$. A saturated sublattice $L\subset M$ corresponds to an subtorus $\G_L$ of $\T$, which is defined by equations
\[
\G_L=\bigcap_{\aa\in L} \left\{\xx\in\T\ |\ \xx^\aa=\one\right\}.
\]

Alternatively, pick a set of basis of $L$, say $\aa_1,\cdots,\aa_r$, where $r$ is the rank of $L$. One can check that $\G_L:=\bigcap_{i=1}^r \left\{\xx^{\aa_i}=\one\right\}$.
Thus, the above finite intersection is independent of the choice of basis. The dimension of $\G_L$ is $n-r$ (i.e. the codimension of $\G_L$ is $r$).

For an arbitrary (not necessarily saturated) sublattice $K$ of $M$, we can also define $\G_L=\bigcap_{\aa\in L} \left\{\xx^\aa=\one\right\}$. It is an algebraic subgroup of $\T$, but not a subtorus if $L$ is not saturated. 
If $L$ is not saturated, then the group $\G_L$ is not connected. The connected component of $\G_L$ containing the unit $\one$ is $\G_{\widetilde{L}}$, where $\widetilde{L}$ is the {\em saturation} of $L$ in $M$, defined by $\widetilde{L}:=\Span_\R(L)\cap M$. The number of connected components in $\G_L$ is $[\widetilde{L}:L]$.

\subsection{Divisible hull}
Let $G$ be an algebraic subgroup of $\T$, the {\em divisible hull} of $G$, denoted by $G^{\div}$, is defined as
\[
G^{\div} := \{ \xx\in\T\ |\ \xx^k\in G 
\text{ for some $k\ge 1$}\}. 
\]

\begin{prop} Let $L$ be a sublattice of $M$, $\T_{\tor}$ be the subgroup of torsion points of $\T$, i.e. $\bmu\in\T_{\tor}$ if and only if each coordinate of $\bmu$ is a root of unity. 
\begin{enumerate}
\item $G^{\div} = G \cdot \T_{\tor} $,

\item If $L\subseteq L'$ are two sublattices of $M$, then $\G_{L'}\subseteq \G_L$ and $\G_{L'}^{\div}\subseteq\G_{L}^{\div}$.

\item Suppose that $\{\aa_1,\cdots, \aa_r\}$ is a basis of $L$, then 
\[
\G_L^{\div} = \bigcup_{\bmu_1,\cdots,\bmu_r\in\T_{\tor}} 
              \limits{\bigcap}_{i=1}^r \left\{\xx^{\aa_i}=\bmu_i\right\}.
\]

\item Suppose $L, L'$ are two sublattices of $M$ such that $\Span_\R(L)=\Span_\R(L')$, then $\G_L^{\div}=\G_{L'}^{\div}$. 

\item The set of all sublattices of $M$ with the same $\R$-span (or, equivalently, with the same saturation) forms a directed system.   

\item Let $V=\Span_\R(L)$ and $\cL$ be the directed system of all sublattices of $M$ whose $\R$-span is $V$, then $\G_L^{\div}= \displaystyle{\varprojlim_{\cL} \G_{L'} = \cup_{L'\in\cL} \G_{L'}}$.

\end{enumerate}
\end{prop}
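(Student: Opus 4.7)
My plan is to prove the six parts sequentially, since each one leans on the previous. Parts (1)--(3) are essentially definition unravelings, (4) carries the structural content, and (5)--(6) then follow formally from (4).

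For (1), the inclusion $G\cdot\T_{\tor}\subseteq G^{\div}$ is immediate by raising to the order of the torsion factor. For the reverse, the key structural input I would invoke is that the identity component $G^{0}$ is a subtorus (hence divisible as an abelian group), while $G/G^{0}$ is finite with coset representatives that may be chosen torsion in $\T$ (since each class $gG^{0}$ has finite order, and divisibility of $G^{0}$ lets me correct $g$ to a torsion element). Given $\xx^{k}\in G$, I would decompose $\xx^{k}=g_{0}\cdot w$ with $g_{0}$ a torsion coset representative and $w\in G^{0}$, extract $w_{1}\in G^{0}$ with $w_{1}^{k}=w$ using divisibility of $G^{0}$, and pick a torsion $g_{1}$ with $g_{1}^{k}=g_{0}$ (any $k$-th root of a torsion element is torsion). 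Then $\xx/(g_{1}w_{1})\in\T[k]\subseteq\T_{\tor}$, and rewriting $\xx=w_{1}\cdot(g_{1}\cdot\xx/(g_{1}w_{1}))$ exhibits the decomposition with $w_{1}\in G^{0}\subseteq G$ and the second factor in $\T_{\tor}$. Part (2) is immediate from the defining equations of $\G_{L}$. Part (3) is a chain of equivalences: since $\{\aa_{i}\}$ is a basis, $\xx^{k}\in\G_{L}$ means $(\xx^{\aa_{i}})^{k}=1$ for each $i$, i.e., each $\xx^{\aa_{i}}$ is a root of unity, with a single uniform $k$ obtained by taking the lcm of their orders.

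The substantive step is (4). I would first show $\G_{L}^{\div}=\G_{\widetilde{L}}^{\div}$ where $\widetilde{L}$ is the saturation of $L$. One direction is (2). For the reverse, if $\xx^{k}\in\G_{L}$ and $\aa\in\widetilde{L}$, setting $c=[\widetilde{L}:L]$ yields $c\aa\in L$, so $\xx^{kc\aa}=\one$; since this holds for every $\aa\in\widetilde{L}$, we get $\xx^{kc}\in\G_{\widetilde{L}}$. Because saturation depends only on the $\R$-span, $\Span_{\R}(L)=\Span_{\R}(L')$ forces $\widetilde{L}=\widetilde{L'}$ and hence $\G_{L}^{\div}=\G_{L'}^{\div}$.

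Parts (5) and (6) are then formal. For (5), given $L,L'$ both with $\R$-span $V$, the sum $L+L'$ again has $\R$-span $V$ and contains both, serving as a common upper bound in the directed system. For (6), the inclusion $\cup_{L'\in\cL}\G_{L'}\subseteq\G_{L}^{\div}$ is immediate from (4) applied termwise ($\G_{L'}\subseteq\G_{L'}^{\div}=\G_{L}^{\div}$), while for the reverse, if $\xx^{k}\in\G_{L}$ then $\xx\in\G_{kL}$ with $kL\in\cL$. The only real point requiring care is the structural argument in (1), namely the choice of torsion coset representatives for $G/G^{0}$ and the torsion-preservation of $k$-th roots of torsion elements; everything else amounts to routine lattice bookkeeping, and I foresee no substantial obstacle.
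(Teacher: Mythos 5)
Your argument is correct, and on most parts it coincides with the paper's: (2), (3), (4) and (6) are handled essentially as in the paper (reduce (4) to the saturation $\widetilde{L}$ using that a positive multiple of each $\aa\in\widetilde{L}$ lies in $L$; prove (6) directly by observing that $\xx^k\in\G_L$ gives $\xx\in\G_{kL}$ with $kL\in\cL$). Two points differ. For (1) the paper simply cites Silverman, whereas you give a self-contained proof via torsion coset representatives for $G/G^{0}$ and divisibility of the subtorus $G^{0}$; this is sound --- the two delicate steps, that each coset of $G^{0}$ in $G$ admits a torsion representative and that a $k$-th root of a torsion element of $\T$ is torsion, both check out in the divisible abelian group $\T$. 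For (5) you exhibit $L+L'$ as a common upper bound under inclusion, while the paper exhibits $L\cap L'$ as a common lower bound (via $kL\subseteq L'$ and $k'L'\subseteq L$). Both verifications are correct, but the ordering relevant to the limit formula in (6) is reverse inclusion of lattices, equivalently inclusion of the subgroups $\G_{L'}$, so that $\bigcup_{L'\in\cL}\G_{L'}$ is a directed union; for that reading the pertinent witness is $L\cap L'$, not $L+L'$. Since your proof of (6) is a direct set-theoretic verification that never invokes directedness, nothing breaks, but if (5) is meant to justify the limit notation in (6) you should add the $L\cap L'$ computation.
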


\begin{proof}
(1) is proved in Silverman's paper~\cite{Sil}. Part (2) is directly by the definition of $\G_L$. For (3), recall that $\G_L=\bigcap_{i=1}^r \left\{\xx^{\aa_i}=\one\right\}$. Thus, for $\xx\in\G_L^{\div}$, the condition $\xx^k\in\G_L$ means $(\xx^k)^{\aa_i}=(\xx^{\aa_i})^{k}=\one$ for all $i$, which is equivalent to that each coordinate of $\xx^{\aa_i}$ is a $k$-th root of unity. This proves (3).

For (4), since $\Span_\R(L)=\Span_\R(L')$ implies that $L$ and $L'$ have the same saturation, it suffices to show that $\G_L^{\div}=\G_{\widetilde{L}}^{\div}$. We have $\G_{\widetilde{L}}^{\div}\subseteq\G_L^{\div}$ by (2). For the other inclusion, notice that $\aa\in \widetilde{L}$ implies $k\aa\in L$ for some integer $k>0$. Thus for $\xx\in\G_{L}$, we have $\xx^{k\aa}=(\xx^k)^\aa=\one$. Apply this to basis vectors $\{\aa_1,\cdots,\aa_r\}$ of $\widetilde{L}$ and let $k$ be the product (or lcm) of all the $k_i$'s which work for $\aa_i$, we conclude that $\xx^k\in \G_{\widetilde{L}}$. This shows $\G_{L}\subseteq\G_{\widetilde{L}}^{\div}$ which then implies that $\G_{L}^{\div}\subseteq\G_{\widetilde{L}}^{\div}$.

To prove (5), we need to show the following: suppose $L, L'$ are two sublattices of $M$ such that $\Span_\R(L)=\Span_\R(L')$, then $L\cap L'$ is also a sublattice of $M$ with the same $\R$-span. This follows from the fact that there exist positive integers $k$ and $k'$ such that $kL\subset L'$ and $k'L'\subset L$. 

Finally, for (6), since all lattices in $\cL$ have the same divisible hull by (4), it suffices to show that if $\xx\in \G_L^{\div}$, then $\xx\in\G_{L'}$ for some $L'\in\cL$. Notice that $\xx\in \G_L^{\div}$ means $\xx^k\in \G_L$ for some $k>0$, i.e. $\xx^{k\aa}=\one$ for all $\aa\in L$. This then implies that $\xx\in \G_{kL}$ where $kL\in\cL$. 
\end{proof}

\subsection{Toric varieties}

A toric variety is a normal variety $X$ which contains an algebraic torus $\T$ as a dense open subset, and an action $\T\times X\to X$ which extends the action of $\T$ on itself by multiplication. Toric varieties can be constructed from a {\em fan} $\fan$ in $N_\R:= N\otimes_\Z \R$. 
A fan is a collection of cones in $N_\R$ such that: (1) suppose $\sigma\in\fan$ and $\tau$ is a face of $\sigma$, then $\tau\in\fan$; and (2) if $\sigma,\sigma'\in\fan$, then $ \sigma\cap \sigma'\in\fan$.

For each cone $\sigma\in\fan$, one defines an affine toric variety $U_\sigma := \Spec \C[\sigma^{\vee}\cap M]$.
These affine toric varieties glue along their intersections to form the toric variety $X(\fan)$. The support of a fan $\fan$ is the subset $\supp(\fan):=\cup_{\sigma\in\fan}\sigma\subset N_\R$. The toric variety $X(\fan)$ is complete if and only if $\supp(\fan)=N_\R$. Complete toric varieties are not always projective. Nevertheless, by toric Chow's lemma, for any complete toric variety $X$, one can always make a $\T$-equivariant proper modification to obtain a projective toric variety $\widetilde{X}$.

\subsection{Orbit decomposition and the orbit-cone correspondence}
Let $X=X(\fan)$ denote the toric varieties associated to the fan $\fan$. We can write $X$ as a disjoint union of orbits under the torus action. More precisely, for each $\tau\in\fan$, pick any point $\aa$ in the {\em relative interior}\footnote{The relative interior of a cone $\sigma\subset N_\R$ is defined as the interior of $\sigma$ in $\Span_\R(\sigma)$.} $\tau^\circ$ of $\tau$, there is a corresponding distinguished point $P_\tau := \lim_{x\to 0} \lambda_\aa(x)$. One can show that the limit always exists in $X$, and is independent of the point $\aa$.
The orbit $O_\tau$ is then the orbit of $P_\tau$ under the $\T$-action. An alternative (and abstract) way to describe $O_\tau$ is as $O_\tau=\Spec \C[\tau^\perp\cap M]\subset X$. Each $O_\tau$ is also a torus, i.e., $O_\tau\cong\G_m^{n-k}(\C)$ where $k=\dim_\R(\tau)$. 

The closure of $O_\tau$ in $X$ is denoted by $V(\tau)$. The correspondence $\tau\mapsto V(\tau)$ is order-reversing in the sense that $\tau\subseteq \tau'$ if and only if $V(\tau)\supseteq V(\tau')$; and since $O_\tau$ and $V(\tau)$ have the same dimension, we have $\dim_\C(V(\tau))=n-\dim_\R(\tau)$.
Each $V(\tau)$ is itself a toric variety, as described in the following. Let $N_\tau := \Span_\R(\tau)\cap N$ denote the sublattice of $N$ containing all lattice points in the vector subspace spanned by $\tau$. Define $N(\tau)=N/N_\tau$ and $M(\tau)=\tau^\perp \cap M$. The {\em star} of $\tau$ in $\fan$, denoted $\Star(\tau)$, is a fan in $N(\tau)_\R$. Under these notations, we have $V(\tau)\cong X(\Star(\tau))$. The definition of $\Star(\tau)$ and the concrete isomorphism between $V(\tau)$ and $X(\Star(\tau))$ are described in~\cite[p.53]{Fulton}.

\subsection{Monomial maps}
Let $\Mat_{m,n}(\Z)$ denote the set of $m \times n$ integer matrices. For a matrix $A=(a_{ij})\in\Mat_{m,n}(\Z)$, the associated monomial map is defined by
\begin{align*}
\varphi_A:  \G_\m^n(\C)        & \longrightarrow \G_\m^m(\C) \\
            (x_1,\cdots,x_n)& \longmapsto 
            (x_1^{a_{11}} x_2^{a_{12}} \cdots x_n^{a_{1n}},\cdots,
            x_1^{a_{m1}} x_2^{a_{m2}} \cdots x_n^{a_{mn}})
\end{align*}
The map $\varphi_A$ is a homomorphism of the algebraic groups $\G_\m^n(\C) \to \G_\m^m(\C)$ and thus extends to a rational map from any $n$-dimensional toric variety to any $m$-dimensional toric variety. We still denote the rational map by $\varphi_A$ to avoid too many notations. The rational map $\varphi_A$ is equivariant under the torus action. Therefore, an orbit (under the torus action) either maps to another orbit, or the whole orbit is in the indeterminacy set. We can read the behavior of an orbit under monomial maps from the action on the cones in the fan associated to the toric variety. We will describe this in the next subsection.

\subsection{Orbits under monomial maps}
\label{section:orbits_under_monomial_maps}
The monomial map $\varphi_A$ acts on the groups of one parameter subgroups and the characters by post-composing and pre-composing with $\varphi_A$, respectively, as follows. 
\begin{align*}
{\varphi_A}_\#:N &\longrightarrow N' & \varphi_A^\#&:M'\longrightarrow M \\
             \lambda_\aa & \longmapsto \varphi_A\circ\lambda_\aa
             & \chi^\bb  & \longmapsto \chi^\bb\circ\varphi_A
\end{align*}
After identifying $N$ and $M$ with $\Z^n$, $N'$ and $M'$ with $\Z^m$, ${\varphi_A}_\#$ and $\varphi_A^\#$ are integral linear transformations: ${\varphi_A}_\#$ is multiplying by the matrix $A$ and $\varphi_A^\#$ by $\transp{A}$.
For this reason, from now on we will denote the action by $\varphi_A$ on $N$ and $M$ by $A$ and $\transp{A}$ instead of ${\varphi_A}_\#$ and $\varphi_A^\#$, respectively.

Suppose $\fan$ and $\fan'$ are fans in $N_\R$ and $N'_\R$, respectively. The map $A$ acts on the cones in $\fan$, and the action reflects how $\varphi_A$ acts on the corresponding orbits. 

\begin{lem}
Let $\sigma\in\fan$. 
\begin{enumerate}
\item If there is a $\sigma'\in\fan'$ such that $A(\sigma)\subset\sigma'$, then $\varphi_A$ is well-defined for all points in $O_\sigma$ and $\varphi_A(O_\sigma)\subset O_{\sigma'}$.
\item If there is no $\sigma'\in\fan'$ such that $A(\sigma)\subset\sigma'$, then every point in the orbit $O_\sigma$ is in the inderterminacy set, i.e. $O_\sigma\subset I(\varphi_A)$.
\end{enumerate}
\end{lem}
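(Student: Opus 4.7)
My plan is to reduce the lemma to the behavior of $\varphi_A$ at the distinguished point $P_\sigma$, and then propagate the conclusion to all of $O_\sigma$ by $\T$-equivariance. Pick $\aa\in\sigma^\circ$, so $P_\sigma=\lim_{x\to 0}\lambda_\aa(x)$. Directly from the definitions of the monomial map and of a one-parameter subgroup,
\[
\varphi_A\circ\lambda_\aa \;=\; \lambda_{A\aa},
\]
so the question of whether $\varphi_A$ is defined at $P_\sigma$ reduces to whether $\lim_{x\to 0}\lambda_{A\aa}(x)$ exists in $X(\fan')$. I will use the standard toric fact that the one-parameter subgroup $\lambda_{\vect c}:\C^*\to\T'$ extends to a morphism $\C\to X(\fan')$ if and only if $\vect c\in\supp(\fan')$, and in that case the image of $0$ is $P_{\tau'}$, where $\tau'\in\fan'$ is the unique cone containing $\vect c$ in its relative interior.

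For part (1), assume $A\sigma\subseteq\sigma'$ for some $\sigma'\in\fan'$. Then $A\aa\in\sigma'$, so $A\aa$ lies in the relative interior of a unique face $\tau'$ of $\sigma'$; note $\tau'\in\fan'$ because $\fan'$ is closed under taking faces. By the standard fact, $\varphi_A(P_\sigma)=P_{\tau'}$ and $\varphi_A$ is defined at $P_\sigma$. Any $P\in O_\sigma$ has the form $t\cdot P_\sigma$ for some $t\in\T$, so equivariance gives $\varphi_A(P)=\varphi_A(t)\cdot P_{\tau'}\in O_{\tau'}$. Because $A$ is linear and $\sigma^\circ$ is connected, $\tau'$ is independent of the choice of $\aa$; in particular, taking $\sigma'$ to be the minimal element of $\fan'$ containing $A\sigma$ yields $\tau'=\sigma'$ and recovers the stated inclusion $\varphi_A(O_\sigma)\subseteq O_{\sigma'}$.

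For part (2), assume no $\sigma'\in\fan'$ contains $A\sigma$. Linearity of $A$ together with the connectedness of $\sigma^\circ$ implies $A\aa\notin\supp(\fan')$, so by the standard fact $\lim_{x\to 0}\lambda_{A\aa}(x)$ does not exist in $X(\fan')$. If $\varphi_A$ were defined at $P_\sigma$, continuity on a Zariski open neighborhood of $P_\sigma$ together with the key identity above would force this limit to exist and equal $\varphi_A(P_\sigma)$, a contradiction; hence $P_\sigma\in I(\varphi_A)$. Finally, the equivariance relation $\varphi_A\circ\mu_t=\mu_{\varphi_A(t)}\circ\varphi_A$, where $\mu_t$ denotes translation by $t\in\T$, shows that the domain of definition of $\varphi_A$ is $\T$-stable and hence $I(\varphi_A)$ is too; since $O_\sigma=\T\cdot P_\sigma$, we conclude $O_\sigma\subseteq I(\varphi_A)$.

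The step requiring the most care is the standard toric fact itself, specifically the nonexistence of $\lim_{x\to 0}\lambda_{\vect c}(x)$ in $X(\fan')$ when $\vect c\notin\supp(\fan')$: on each affine chart $U_{\sigma'}$ one checks by a direct monomial calculation that the extension exists iff $\vect c\in\sigma'$, and to rule out a limit living in some other chart one invokes separatedness of $X(\fan')$ to force uniqueness of any extension. Once this is in hand, the remainder of the argument is a formal equivariance computation.
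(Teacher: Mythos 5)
Part (1) of your proposal is essentially the paper's argument (reduce to the distinguished point $P_\sigma$ via $\varphi_A\circ\lambda_\aa=\lambda_{A\aa}$, then propagate by $\T$-equivariance), and you are in fact more careful than the paper about the point that $A\aa$ need only lie in the relative interior of a \emph{face} $\tau'$ of $\sigma'$. But part (2) contains a genuine error. You claim that if no cone of $\fan'$ contains $A\sigma$, then ``linearity of $A$ together with the connectedness of $\sigma^\circ$ implies $A\aa\notin\supp(\fan')$.'' This is false: take $\fan'$ complete, so that $\supp(\fan')=N'_\R$ and every $A\aa$ lies in the support; it can still easily happen that $A\sigma$ straddles two maximal cones and hence is contained in no single cone of $\fan'$. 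This is exactly what happens in the paper's second example ($X=\P^2$, $B=\left(\begin{smallmatrix}-2&1\\1&3\end{smallmatrix}\right)$), where every positive-dimensional cone is sent to ``I'' even though the fan is complete. Connectedness of $A(\sigma^\circ)$ does not prevent it from being covered by the union of several cones while lying in none of them.

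The case you are missing is the essential one, and it is the one the paper's proof treats: when $A\sigma$ is contained in no single cone of $\fan'$ but $A(\sigma^\circ)$ still meets $\supp(\fan')$, one picks $\aa_1,\aa_2\in\sigma^\circ\cap N$ such that $A\aa_1$ and $A\aa_2$ lie in the relative interiors of two \emph{distinct} cones $\sigma_1'\ne\sigma_2'$ of $\fan'$ (or such that one of the $A\aa_i$ lies outside the support altogether). Both curves $\lambda_{\aa_i}(x)$ tend to the same point $P_\sigma$ as $x\to 0$, yet $\varphi_A\circ\lambda_{\aa_i}=\lambda_{A\aa_i}$ tends to $P_{\sigma_1'}\ne P_{\sigma_2'}$; if $\varphi_A$ were a morphism near $P_\sigma$, continuity together with separatedness of $X(\fan')$ would force these two limits to coincide. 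Hence $P_\sigma\in I(\varphi_A)$, and your equivariance argument then correctly spreads this to all of $O_\sigma$. With this two-limits argument substituted for the false claim, your proof of (2) goes through; the rest of your write-up is fine.
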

\begin{proof}
For (1), one can see this by looking at the distinguished point $P_\sigma$. Since $A(\aa)\in (\sigma')^\circ$ for $\aa\in\sigma^\circ$, this implies $\varphi_A(P_\sigma)=P_{\sigma'}$ and thus $\varphi_A(O_\sigma)\subset O_{\sigma'}$.

On the other hand, the condition in (2) implies that there are $\aa_1, \aa_2\in \sigma^\circ\cap N$ and $\sigma'_1,\sigma'_2\in\fan'$ such that $A(\aa_i)\in (\sigma'_i)^\circ$, $i=1,2$. This then implies that 
\[
\varphi_A: P_\sigma \longmapsto \varphi_A\left(\lim_{x\to 0} \lambda_{\aa_i}(x)\right)=P_{\sigma'_i}.
\]
That is, if we approach the distinguished point $P_\sigma$ using different one-parameter subgroups, and then apply the monomial map $\varphi_A$, we in fact get different results. Thus, the image $\varphi_A(P_\sigma)$ is not well-defined, and $P_\sigma\in I(\varphi_A)$. Moreover, since $\T P_\sigma=O_\sigma$ and the action is transitive, we conclude that $O_\sigma\subset I(\varphi_A)$.
\end{proof}

Notice that
\[
\fan_A = \{ \sigma\in\fan\ |\ \exists \sigma'\in\fan'\text{ such that } A(\sigma)\subset\sigma' \}
\]
is a fan. Indeed, if $\tau$ is a face of $\sigma$ and $A(\sigma)\subset\sigma'$, then $A(\tau)\subset A(\sigma)\subset\sigma'$. Also, if $A(\sigma)\subset\sigma'$ and $A(\tau)\subset\tau'$, then $A(\sigma\cap\tau)\subset\sigma'\cap\tau'\in\fan'$. Therefore, $X(\fan_A)$ is still a toric variety. 

When we have $\varphi_A(O_\sigma)\subset O_\sigma'$, then the restriction 
\[
\varphi_A|_{O_\sigma}:O_\sigma\to O_{\sigma'}
\]
is again a monomial map. Notice that $A(\sigma)\subset\sigma'$ implies $A(N_\sigma)\subset N'_{\sigma'}$. Thus $A$ induces a map on the quotient spaces
\[
\bar{A}:N(\sigma)\to N'(\sigma').
\]
The monomial map from $O_{\sigma}$ to $O_{\sigma'}$ is the map $\varphi_{\bar{A}}$ associated with the map $\bar{A}$. Moreover, it extends to a (rational) monomial maps between the orbit closures (which are toric varieties)
\[
\varphi_{\bar{A}}:V(\sigma)\dashrightarrow V(\sigma').
\]

Next, let $\Mat^+_n(\Z)$ be the set of $n\times n$ integer matrices with nonzero determinants, a matrix $A\in\Mat^+_n(\Z)$ induces a rational monomial selfmap $\varphi_A$ on any $n$-dimensional toric variety $X(\fan)$. 
From the above discussion, we can summarize how the torus orbits act under the monomial map $\varphi_A: X(\fan)\dashrightarrow X(\fan)$ using a directed graph, as follows.
\begin{itemize}
\item Vertices are labeled by the cones in the fan $\fan$, with an extra vertex labeled `I' (for indeterminacy). 
\item For a cone $\sigma\in\fan$, if there is another $\sigma'\in\fan$ such that $A(\sigma)\subset\sigma'$, then we draw a (directed) edge from $\sigma$ to $\sigma'$ to denote that $\varphi_A$ maps $O_\sigma$ to $O_{\sigma'}$. 
\item If such $\sigma'$ does not exist in $\fan$, then we draw an edge from $\sigma$ to `I' to denote that the whole orbit $O_\sigma$ is indeterminant. 
\end{itemize}

Recall that $X(\fan)_{\varphi_A}$ denotes the set of points $P\in X(\fan)$ that has well-defined orbit under the map $\varphi_A$, i.e. $\varphi_A^k(P)$ is well-defined for $k\ge 0$. From the graph, we know that $X(\fan)_{\varphi_A}$ is the union of the orbits $O_\sigma$ such that $\sigma$ never reach `I' along the edges in the graph. 

One can show that $X(\fan)_{\varphi_A}$ is a toric variety. In fact, let
\[
\widetilde{\fan}_A=\bigcap_{k=1}^{|\fan|} \fan_{A^k},
\]
then $\widetilde{\fan}_A$ is a fan and $X(\fan)_{\varphi_A}=X(\widetilde{\fan}_A)$ is still a toric variety. This is worth mentioning since in general, for a rational map $\varphi:X\dashrightarrow X$, the space $X_\varphi$ is not necessarily a variety.

\begin{ex}
The following picture shows a fan $\fan$ with the corresponding toric variety $X(\fan)\cong \P^2$.

\includegraphics[scale=1]{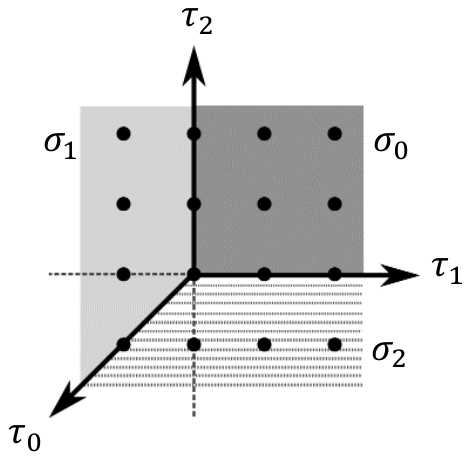}

More precisely, let $\tau_0, \tau_1$, and $\tau_2$ be the one-dimensional cones generated by 
$\left(\begin{smallmatrix} -1  \\ -1 \end{smallmatrix}\right),
\left(\begin{smallmatrix} 1  \\ 0 \end{smallmatrix}\right)$, 
and 
$\left(\begin{smallmatrix} 0  \\ 1 \end{smallmatrix}\right)$, respectively. Let $\sigma_0$ be the cone generated by 
$\left(\begin{smallmatrix} 1  \\ 0 \end{smallmatrix}\right)$ 
and 
$\left(\begin{smallmatrix} 0  \\ 1 \end{smallmatrix}\right)$; $\sigma_1$ and $\sigma_2$ are defined similarly. Then the fan $\fan$ is
\[
\fan=\Bigl\{ \{{\bf 0}\}, \tau_0, \tau_1, \tau_2, \sigma_0, \sigma_1, \sigma_2 \Bigr\}.
\]

Suppose that the homogeneous coordinates of $\P^2$, written as $[z_0:z_1:z_2]\in\P^2$, are defined such that $V(\tau_i)=(z_i=0)$, $i=0,1,2$. Then $V(\sigma_0)=[1:0:0]$, $V(\sigma_1)=[0:1:0]$, and  $V(\sigma_2)=[0:0:1]$. Suppose that $U_{\sigma_0}\cong \C[x_1,x_2]$ where $x_i=z_i/z_0$, $i=1,2$. Let 
$A= \left(\begin{smallmatrix} 1 & 3 \\ 2 & 2 \end{smallmatrix}\right)$, then $\varphi_A$ is the monomial map $(x_1,x_2)\mapsto (x_1 x_2^3, x_1^2 x_2^2)$. The following is the directed graph of $A$.

\includegraphics[scale=0.8]{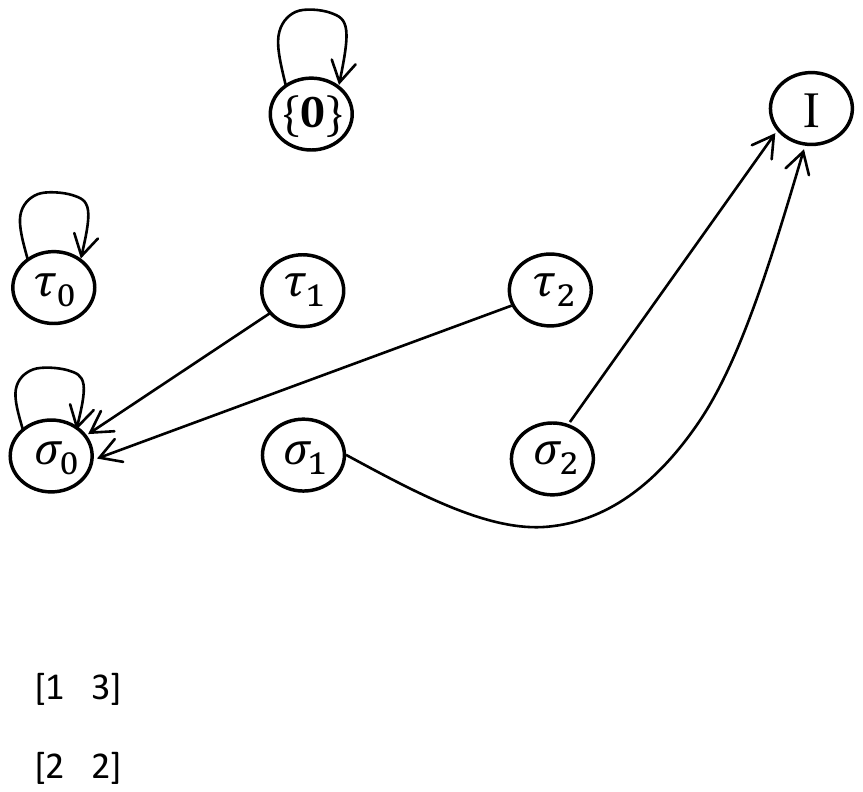}

One sees that $\widetilde{\fan}_A= \bigl\{ \{{\bf 0}\}, \tau_0, \tau_1, \tau_2, \sigma_0 \bigr\}$ and $X(\fan)_{\varphi_A}= \P^2\backslash \bigl\{[0:1:0], [0:0:1]\bigr\}$.

\end{ex}

\begin{ex} 
With the same notations as the previous example, now let $B=
\left(\begin{smallmatrix} -2 & 1 \\ 1 & 3 \end{smallmatrix}\right)$, then the directed graph associated to $B$ is the following.

\includegraphics[scale=0.8]{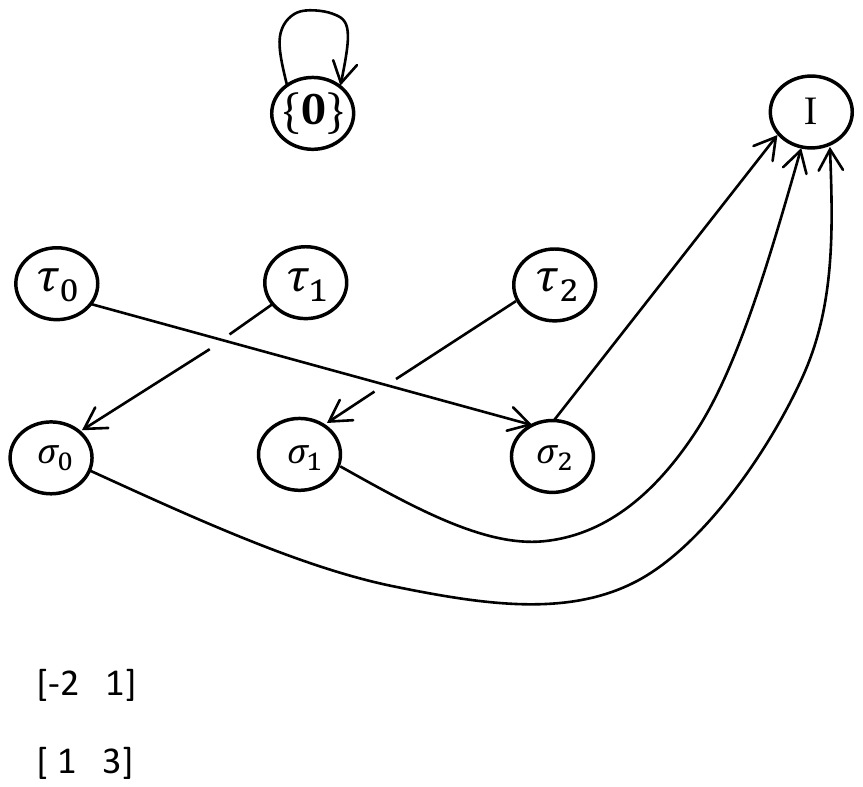}

In this case, we have $\widetilde{\fan}_B= \bigl\{ \{{\bf 0}\} \bigr\}$ and $X(\fan)_{\varphi_B}= (\C^*)^2$.

\end{ex}

Finally, let us make some remarks here. First, if we order the vertices of the graph by the dimension of the cones and give `I' the order of $+\infty$ by convention, then $\det(A)\ne 0$ implies that cones never collapse into lower dimensional cones. Hence, the arrows of the graph will be ``order preserving''. In particular, the positive dimensional cones will never map to $\{{\bf 0}\}$. This means points in $X(\fan)_{\varphi_A}$ never have a Zariski dense $\varphi_A$-orbit. Therefore the Conjecture~\ref{main_conjectures} (4) is true on $X(\fan)_{\varphi_A}$.

Next, by the pigeonhole principle and the fact that the number of cones is finite, we can easily conclude the following, which is not true in general for rational maps.

\begin{cor}
Let $\varphi:X\dashrightarrow X$ be a monomial map on a toric variety. There exists an integer $N>0$, depending on $X$, such that for all $P\in X$, if $\varphi(P), \varphi^2(P),\cdots, \varphi^N(P)$ are well-defined, then $P\in X_\varphi$.\hfill\qed
\end{cor}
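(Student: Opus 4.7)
The approach is a pigeonhole argument on the orbit-graph introduced in Section~\ref{section:orbits_under_monomial_maps}. I will take $N=|\fan|$, the number of cones in the fan of $X$. The key structural observation is that this graph is a \emph{functional} graph on the finite vertex set $\fan\cup\{I\}$: each cone $\sigma\in\fan$ has exactly one outgoing edge, since the $\T$-orbits in $X$ are disjoint, so when $\varphi_A$ is defined on $O_\sigma$ its image $\varphi_A(O_\sigma)$ lies in a unique target orbit $O_{\sigma'}$; otherwise the unique edge goes to $I$.

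Given $P\in O_\sigma$, the condition that $\varphi^k(P)$ is well-defined is equivalent to the length-$k$ walk starting at $\sigma$ in this graph not reaching $I$. Suppose now that $\varphi(P),\varphi^2(P),\ldots,\varphi^N(P)$ are all well-defined. Then the first $N$ steps of the walk traverse $N+1=|\fan|+1$ vertices, all in $\fan\cup\{I\}$ but none equal to $I$; by the pigeonhole principle some cone $\tau\in\fan$ is visited twice. Because the graph is functional, the walk from the first visit to $\tau$ onward is periodic and lies entirely inside the cycle through $\tau$, which is contained in $\fan$. Combined with the first $N$ steps, which avoid $I$ by hypothesis, this shows the entire forward walk from $\sigma$ avoids $I$, so $\varphi^k(P)$ is well-defined for every $k\ge 0$ and $P\in X_\varphi$.

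I do not foresee any substantive obstacle; the argument is essentially the hint given right before the corollary. The only detail worth spelling out carefully is that each $\sigma\in\fan$ has out-degree exactly one in the orbit-graph, which justifies the use of ``\emph{the} walk'' and ``functional graph'' above. Note also that $N=|\fan|$ depends only on $X$, as required.
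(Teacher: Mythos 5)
Your argument is correct and is exactly the one the paper intends: the corollary is stated with only the remark ``by the pigeonhole principle and the fact that the number of cones is finite,'' and your proof with $N=|\fan|$, the functional orbit-graph, and the forced entry into a cycle avoiding the vertex $I$ fills in precisely those details. No gaps.
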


See \cite[Proposition 4.11]{HP} for a similar proposition.

\subsection{The dynamical Mordell-Lang conjecture for monomial maps}
\label{section:DML}
As the first application of our description of the orbits under monomial maps, we will prove the dynamical Mordell-Lang conjecture for monomial maps on arbitrary toric varieties.

\begin{thmB}
Let $X=X(\fan)$ be a toric variety, $\varphi=\varphi_A$ be a monomial map, and $Y\subset X$ be a closed subvariety of $X$. Then for all $P\in X_\varphi$, the sequence
\[
\{ k\in \N\ |\ \varphi^k(P)\in Y\}
\]
 is a finite union of arithmetic progressions.
\end{thmB}

\begin{proof}
The dynamical Mordell-Lang conjecture is known for \'{e}tale endomorphisms of quasiprojective varieties~\cite{BGT}.  In particular, monomial maps on the algebraic torus $\varphi_A:\T\to\T$ are \'{e}tale, thus the dynamical Mordell-Lang conjecture holds for $\xx\in\T$.

For an arbitrary $\xx\in X_\varphi$, from the description of $X_\phi$ using the directed graph associated to $\varphi=\varphi_A$, we know that the points $\varphi^k(\xx)$ will eventually fall periodically into a finite number of torus orbits. More precisely, there exist $\sigma_1,\cdots, \sigma_p\in\fan$ such that $\varphi^{pk+j}(\xx)\in O_j := O_{\sigma_j}$ for $k$ large enough and $1\le j\le p$.  

Thus, for a closed subvariety $Y\subset X$, we consider $Y_j=Y\cap O_j$, which is a closed subvariety of $O_j$. Since the iterations of $\xx$ eventually fall periodically on $O_1,\cdots,O_p$, it suffices to show that for each $j$, the sequence
\[
\{ \ell \ |\ \varphi^\ell(\xx)\in Y_j\}
\] 
is a finite union of arithmetic sequences. However, since $O_j$ is itself an algrbraic torus, and $\varphi^p|_{O_j}$ is monomial, we know the dynamical Mordell-Lang conjecture holds for $\varphi^p|_{O_j}$. Therefore, the above sequence is indeed a finite union of arithmetic sequences, and the proof is complete.
\end{proof}


\section{Height Growth Under Monomial Maps}
\label{section:height_growth}

We focus on the study of height growth in this section. The first goal is to prove Theorem~\ref{thm:main_1}. Before we do so, we will need two lemmas. The first lemma states that for a point in the algebraic torus, the height growth under a monomial map only depend on the monomial map and is independent of the compactification (i.e., the toric variety  which contains the torus as a dense open subset). 

\begin{lem}
\label{lem_indep_of_embed}
Let $X_1, X_2$ be two $n$-dimensional projective toric varieties. Thus,  they both contain the $n$-torus $\T$ as a dense open subset. Denote $h_i=h_{X_i}$, the corresponding height functions on $X_i$, $i=1,2$.
Then we have $h_1(\xx)\asymp h_2(\xx)$ for $\xx\in\G_\m^n$. More precisely, there exist $C_1, C_2 >0$ such that for all $\xx\in\G_\m^n$, we have $C_1 h_2(\xx)+ O(1) \le h_1(\xx) \le C_2 h_2(\xx) + O(1)$.  
\end{lem}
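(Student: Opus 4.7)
My plan is to compare $h_1$ and $h_2$ indirectly, by passing to a single projective toric variety $Y$ that dominates both $X_1$ and $X_2$. Concretely, let $\fan_1, \fan_2$ be the fans of $X_1, X_2$ in $N_\R$; their common refinement (for instance the fan whose cones are the intersections $\sigma_1 \cap \sigma_2$ with $\sigma_i \in \fan_i$) already gives a complete toric variety admitting $\T$-equivariant morphisms to both $X_1$ and $X_2$, and if the result is not projective I apply toric Chow's lemma (mentioned in Section~\ref{section:background}) to refine further. The outcome is a projective toric variety $Y = X(\fan_Y)$ with proper birational $\T$-equivariant morphisms $\pi_i \colon Y \to X_i$, each restricting to the identity on $\T$.

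Fix $\T$-invariant ample divisors $D_i$ on $X_i$ and a $\T$-invariant ample divisor $H$ on $Y$, and set $h_i = h_{X_i, D_i}$. Functoriality of the Weil height machine gives
\[
h_i(\xx) = h_{Y, \pi_i^* D_i}(\xx) + O(1) \quad \text{for all } \xx \in \T,
\]
since $\pi_i$ is the identity on the torus. The lemma therefore reduces to the claim $h_{Y, \pi_i^* D_i} \asymp h_{Y, H}$ on $\T$ for $i = 1, 2$. For the upper bound, ampleness of $H$ provides an integer $n \ge 1$ with $nH - \pi_i^* D_i$ linearly equivalent to an effective divisor; since every class in sight is $\T$-invariant, I may take this representative to be $\T$-invariant and hence supported in the toric boundary $Y \setminus \T$. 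The standard positivity estimate $h_E \ge O(1)$ off $\supp(E)$ for effective $E$ then yields $h_{Y, \pi_i^* D_i}(\xx) \le n\, h_{Y, H}(\xx) + O(1)$ for $\xx \in \T$. For the lower bound, $\pi_i^* D_i$ is big (pullback of an ample divisor under a birational morphism), so Kodaira's lemma supplies $m \ge 1$ with $m \pi_i^* D_i - H$ linearly equivalent to an effective divisor, which may again be chosen $\T$-invariant and supported in the boundary. This gives $h_{Y, H}(\xx) \le m\, h_{Y, \pi_i^* D_i}(\xx) + O(1)$ on $\T$, and combining yields $h_1 \asymp h_H \asymp h_2$ on $\T$.

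The main (and essentially only) technical obstacle is ensuring that all the auxiliary effective representatives miss the torus, since $h_E \ge O(1)$ only holds off $\supp(E)$. This is precisely why I work with $\T$-equivariant divisors throughout: any effective $\T$-invariant Cartier divisor on a toric variety is automatically supported in the toric boundary, so the above inequalities hold on all of $\T$. Once this is arranged, the rest is a routine combination of functoriality of heights with standard ampleness/bigness on the projective variety $Y$.
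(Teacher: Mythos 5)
Your proof is correct, but it takes a genuinely different route from the paper's. The paper never leaves $X_1$ and $X_2$: it picks $\T$-invariant very ample divisors $D_1,D_2$ with full-dimensional polytopes $\PP_1,\PP_2\subset M_\R$, observes that $\PP_1\subset C\PP_2+v$ for some integer $C>0$ and $v\in M$, and reads the inequality $h_{X_1,D_1}\le C\,h_{X_2,D_2}+O(1)$ directly off the character embeddings $\xx\mapsto[\chi^m(\xx)]_{m\in\PP_i\cap M}$ (the translation by $v$ rescales all coordinates by $\chi^v(\xx)$ and so does not change the projective point). You instead resolve to a common toric modification $Y$ via refinement of fans plus toric Chow, pull everything back, and run the general positivity machinery: functoriality of heights, $h_E\ge O(1)$ off $\supp(E)$ for effective $E$, and Kodaira's lemma for the big divisor $\pi_i^*D_i$. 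The essential toric input in your argument --- that an effective class on a toric variety admits a $\T$-invariant, hence boundary-supported, representative (via the weight decomposition of $H^0(Y,\O(D))$ by lattice points of the polytope) --- plays exactly the role that the polytope containment $\PP_1\subset C\PP_2+v$ plays in the paper. Your version costs more machinery (and one should phrase the invariant-representative step as a property of the class's sections rather than of ``the class being $\T$-invariant''), but it is the argument that survives outside the toric world; the paper's is shorter, self-contained, and purely combinatorial. Both are valid.
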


\begin{proof}
First of all, choosing different ample divisors, say on $X_1$, will result in height functions that are asymptotically equivalent~\cite[Proposition 12]{KaSi}. Thus, without loss of generality, we may choose $\T$-invariant very ample divisors $D_1, D_2$ on $X_1, X_2$, respectively, to compute the heights. Such divisors correspond to polytopes $\PP_1,\PP_2\subset M_\R$. 

The embeddings of $X_i$ into projective spaces are given by the characters $\PP_i\cap M$. 
$D_1, D_2$ being very ample implies that $\PP_1, \PP_2$ are of full dimension. As a consequence, there is a positive integer $C$ and an element $v\in M$ such that $\PP_1\subset C\PP_2 + v$. This implies
\[
h_{X_1,D_1}\le h_{X_2, C D_2} +O(1) = C h_{X_2, D_2} +O(1).
\]
The other inequality can be proved similarly. Therefore, we have $h_1\asymp h_2$.
\end{proof}

The second lemma is about linear algebra concerning the spectral radius of a linear map on a subspace invariant under iterates of the map. We will need it when we look at the monomial map on various orbits of a toric variety. I would like to thank Jeff Diller for discussing the proof of this lemma with me.

\begin{lem}
\label{lemma:spectral_radius}
Let $T:\R^n\to\R^n$ be a linear transformation, $W$ be a subspace of $\R^n$ which is invariant under $T^p$ but not fewer iterates of $T$, and $V$ be the smallest subspace of $\R^n$ which contains $W$ and is invariant under $T$, i.e., $V$ is the span of $W, T(W), T^2(W),\cdots, T^{p-1}(W)$. Then we have 
\[
\rho(T^p|_W)=\rho(T|_V)^p.
\]
\end{lem}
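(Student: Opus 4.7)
The plan is to split the equality $\rho(T^p|_W)=\rho(T|_V)^p$ into two steps: first show $\rho(T^p|_W)=\rho(T^p|_V)$, then use $T^p|_V=(T|_V)^p$ together with the general fact $\rho(S^p)=\rho(S)^p$ applied to $S=T|_V$.

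First I would record the basic invariances. Writing $W_i:=T^i(W)$ for $0\le i\le p-1$, the subspace $V=W_0+W_1+\cdots+W_{p-1}$ is $T$-invariant, because $T(V)=W_1+\cdots+W_{p-1}+T^p(W)\subseteq V$ using the hypothesis $T^p(W)\subseteq W$. Each $W_i$ is $T^p$-invariant since $T$ and $T^p$ commute: $T^p(W_i)=T^i T^p(W)\subseteq T^i(W)=W_i$. Moreover the surjection $T^i:W\to W_i$ intertwines $T^p$ and has $T^p$-invariant kernel, so $W_i$ is a $T^p$-equivariant quotient of $W$, and the eigenvalues of $T^p|_{W_i}$ form a sub-multiset of those of $T^p|_W$; in particular $\rho(T^p|_{W_i})\le\rho(T^p|_W)$.

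To obtain $\rho(T^p|_V)\le\rho(T^p|_W)$, I would use the filtration $V_k:=W_0+W_1+\cdots+W_k$. Each $V_k$ is $T^p$-invariant, and the short exact sequence $0\to V_{k-1}\to V_k\to V_k/V_{k-1}\to 0$ shows that the eigenvalues of $T^p|_{V_k}$ (with multiplicity) are those of $T^p|_{V_{k-1}}$ together with those of the induced $T^p$ on $V_k/V_{k-1}$. Since $W_k$ surjects $T^p$-equivariantly onto $V_k/V_{k-1}$, the eigenvalues of the latter lie among those of $T^p|_{W_k}$, hence among those of $T^p|_W$ by the previous paragraph. Inducting from $V_0=W$ up to $V_{p-1}=V$ gives the desired bound. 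The reverse inequality is immediate: since $W\subseteq V$ is $T^p$-invariant, extending a basis of $W$ to one of $V$ puts $T^p|_V$ in block upper-triangular form with $T^p|_W$ as the top block, so $\chi_{T^p|_W}\mid\chi_{T^p|_V}$ and $\rho(T^p|_W)\le\rho(T^p|_V)$.

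Combining the two inequalities yields $\rho(T^p|_W)=\rho(T^p|_V)=\rho((T|_V)^p)=\rho(T|_V)^p$. The main technical point is the filtration/quotient step, which relies on the routine fact that eigenvalues of an operator split (with multiplicities) across an invariant subspace and its quotient. Notably, the minimality of $p$ in the hypothesis is not used in the proof itself; it merely pins down the natural value of $p$ at which to form $V$, and in applications to Section~\ref{section:orbits_under_monomial_maps} it will correspond to the length of the cycle of torus orbits through which $\varphi_A$ drives the chosen subspace.
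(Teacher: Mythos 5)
Your proof is correct, and for the substantive inequality $\rho(T^p|_V)\le\rho(T^p|_W)$ it takes a genuinely different route from the paper. The paper argues analytically: it picks an eigenvector $v\in V$ realizing $\rho(T|_V)$, decomposes $v=w_0+\cdots+w_{p-1}$ with $w_j\in W_j=T^j(W)$, bounds $\|T^{pk}(v)\|$ by $\sum_j\|(T^p|_{W_j})^k\|\,\|w_j\|$ using that each $T^p|_{W_j}$ is intertwined with $T^p|_W$ via $T^j$, and concludes by Gelfand's formula $\lim_k\|(T^p|_W)^k\|^{1/k}=\rho(T^p|_W)$. You instead argue algebraically: each $W_i$ is a $T^p$-equivariant quotient of $W$ (the kernel of $T^i|_W$ is $T^p$-invariant because $T^i$ and $T^p$ commute), and the filtration $V_0\subseteq V_1\subseteq\cdots\subseteq V_{p-1}=V$ with $V_k/V_{k-1}$ a quotient of $W_k$ shows, by the standard block-triangular splitting of characteristic polynomials, that every eigenvalue of $T^p|_V$ already occurs as an eigenvalue of $T^p|_W$. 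Your version proves slightly more (containment of spectra, not just of spectral radii) and sidesteps two small rough spots in the paper's argument: the leading eigenvalue of $T|_V$ need not admit a real eigenvector (one must pass to a complexification or a two-dimensional invariant subspace), and ``conjugate by $T^j$'' is only literally a conjugation when $T^j|_W$ is injective, whereas your quotient formulation needs no such assumption. The trivial direction $\rho(T^p|_W)\le\rho(T^p|_V)=\rho(T|_V)^p$ is handled the same way in both proofs, and you are right that the minimality of $p$ is never used.
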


\begin{proof}
First, since both $V$ and $W$ are invariant under $T^p$, we have 
\[
\rho(T^p|_W)\le \rho(T^p|_V)=\rho((T|_V)^p)=\rho(T|_V)^p.
\]

We will need to prove the inequality in the other direction. Suppose that $v\in V$ is an eigenvector which realize the spectral radius $\rho(T|_V)$, i.e., $T(v)=\lambda v$ and $|\lambda|=\rho(T|_V)$. Write $v=w_0 + \cdots + w_{p-1}$ such that $w_j\in W_j:=T^{j}(W)$. Notice that each $W_j$ is invariant under $T^p$. Moreover, each $T^p|_{W_j}$ is conjugate to  $T^p|_{W}$ by $T^j$, thus there are positive numbers $C_0=1, C_1, \cdots, C_{p-1}$ such that 
\[
\|T^p|_{W_j}\|\le C_j\cdot\|T^p|_{W}\|.
\]
Here $\|T^p|_{W_j}\|$ is the operator norm with respect to some norm $\|\cdot\|$ on $\R^n$.
Taking the $pk$-th iterate of $T$, we have
\begin{align*}
\rho(T|_V)^{pk}\| v \| &=  \| T^{pk}(v) \|  \\
             & \le \sum_{j=0}^{p-1} \|T^{pk}(w_j)\| \\
             & \le \sum_{j=0}^{p-1} \|(T^{p}|_{W_j})^{k}\| \cdot \| w_j\|\\
             & \le \sum_{j=0}^{p-1} C_j\|(T^{p}|_{W})^{k}\| \cdot \| w_j\| \\
             & = \|(T^{p}|_{W})^{k}\| \cdot \sum_{j=0}^{p-1} C_j\| w_j\|             
\end{align*}
Taking $k$-th root on both sides, we get 
\[
\rho(T|_V)^{p}\le C^{1/k} \|(T^{p}|_W)^k\|^{1/k}
\]
where $C>0$ is a constant independent of $k$. Finally, take the limit $k\to\infty$, we obtain the desired inequality
\[
\rho(T|_V)^{p}\le \lim_{k\to\infty}C^{1/k} \|(T^{p}|_W)^k\|^{1/k} = \rho(T^{p}|_W).
\]
This completes the proof of the lemma.
\end{proof}

Now we are ready to prove Theorem~A. Let us recall the statement of the theorem first.

\begin{thmA}
Let $\varphi:X\dashrightarrow X$ be a monomial map defined on a projective toric variety $X=X(\fan)$ by a matrix $A\in\Mat^+(\Z)$. Suppose that $f_1(t),\cdots, f_s(t)\in\Z[t]$ be the monic irreducible factors of the characteristic polynomial $\det(A-t I)$ of $A$. Then $\alpha_\varphi(P)$ exist for all $P\in X(\Qbar)_\varphi$, and
\[
\{\alpha_\varphi(P)\ |\ P\in X(\Qbar)_\varphi\} = \{ 1, \rho(f_1),\cdots,\rho(f_s)\}.
\]
\end{thmA}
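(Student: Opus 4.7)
The plan is to reduce Theorem~A to Silverman's torus result \cite[Corollary 32]{Sil} via the orbit analysis of Section~\ref{section:orbits_under_monomial_maps}. Given $P\in X(\Qbar)_\varphi$, the directed graph associated to $\varphi_A$ shows that the forward orbit of $P$ is eventually confined to a periodic cycle of torus orbits $O_{\sigma_1}\to O_{\sigma_2}\to\cdots\to O_{\sigma_p}\to O_{\sigma_1}$, with $A(\sigma_i)\subset\sigma_{i+1}$ cyclically. Since $\det A\ne 0$, all of the $\sigma_i$ have the same dimension and $A^p(\Span_\R(\sigma_1))=\Span_\R(\sigma_1)$. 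Replacing $P$ by a high enough iterate, I may assume $P\in O_{\sigma_1}(\Qbar)$; then $\varphi^p|_{O_{\sigma_1}}$ is the monomial selfmap $\varphi_{\overline{A^p}}$, where $\overline{A^p}\colon N(\sigma_1)\to N(\sigma_1)$ is the map induced by $A^p$ on $N/N_{\sigma_1}$.

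Second, I would reduce the height computation to an intrinsic one on $O_{\sigma_1}$. The formal identity $\alpha_\varphi(P)=\alpha_{\varphi^p}(P)^{1/p}$ (valid once the right-hand limit exists) together with an invocation of Lemma~\ref{lem_indep_of_embed} applied to two projective compactifications of the torus $O_{\sigma_1}$---namely the orbit closure $V(\sigma_1)$ inside a $\T$-equivariant projective resolution of $X$, and any convenient intrinsic projective toric compactification of $O_{\sigma_1}$---reduce $\alpha_\varphi(P)$ to a $p$-th root of an arithmetic degree computed entirely inside the torus $O_{\sigma_1}$. Then \cite[Corollary 32]{Sil} applied to $\varphi_{\overline{A^p}}$ on $O_{\sigma_1}$ guarantees existence of $\alpha_{\varphi^p}(P)$ and places it in $\{1,\rho(g_1),\ldots,\rho(g_t)\}$, where the $g_j$ are the monic irreducible factors of $\det(\overline{A^p}-tI)\in\Z[t]$.

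Third, I translate the $g_j$ back to data attached to $A$ itself. Because $A^p$ preserves $\Span_\R(\sigma_1)$, one has the factorization $\det(A^p-tI)=\det(A^p|_{\Span_\R(\sigma_1)}-tI)\cdot\det(\overline{A^p}-tI)$, so every root of any $g_j$ has the form $\lambda^p$ for some eigenvalue $\lambda$ of $A$. Take $\mu=\lambda^p$ realizing $|\mu|=\rho(g_j)$, and let $f_i$ be the irreducible factor of $\det(A-tI)$ having $\lambda$ as a root. For every Galois conjugate $\lambda'$ of $\lambda$ over $\Q$, the element $(\lambda')^p$ is again a root of $g_j$ (as $g_j\in\Q[t]$), so $|\lambda'|^p\le|\lambda|^p$; hence $|\lambda|=\rho(f_i)$ and $\rho(g_j)^{1/p}=\rho(f_i)$. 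This gives $\{\alpha_\varphi(P):P\in X(\Qbar)_\varphi\}\subset\{1,\rho(f_1),\ldots,\rho(f_s)\}$; the reverse inclusion is immediate, since $1$ is realized at any torsion point and each $\rho(f_i)$ is already realized on the dense torus $\G_\m^n(\Qbar)\subset X(\Qbar)_\varphi$ by Silverman's theorem.

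The main obstacle will be the height comparison in the second step: Lemma~\ref{lem_indep_of_embed} as stated compares two compactifications of the \emph{same} torus, whereas the present application needs to compare $h_X|_{O_{\sigma_1}(\Qbar)}$ with an intrinsic toric height on $O_{\sigma_1}$ obtained by compactifying $O_{\sigma_1}$ alone. This amounts to checking, through the polytope dictionary, that the restriction to $V(\sigma_1)$ of a $\T$-invariant very ample divisor on $X$ corresponds to a full-dimensional face polytope in $M(\sigma_1)_\R$, possibly after passing to a sufficiently positive multiple or to a $\T$-equivariant modification. Lemma~\ref{lemma:spectral_radius} does not appear explicitly in the outline above, but it is the structural content of the final Galois step: the cyclic $A$-invariant hull $V'=\Span_\R(\sigma_1)+A\,\Span_\R(\sigma_1)+\cdots+A^{p-1}\Span_\R(\sigma_1)$ gives the identity $\rho(A^p|_{\Span_\R(\sigma_1)})=\rho(A|_{V'})^p$ that underlies the passage $\rho(g_j)^{1/p}=\rho(f_i)$.
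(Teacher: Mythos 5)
Your overall strategy---push $P$ into a periodic cycle of torus orbits, apply \cite[Corollary 32]{Sil} to $\varphi^p$ restricted to one orbit, and translate the spectral data of $\overline{A^p}$ back to irreducible factors of $\det(A-tI)$---is the same as the paper's, and your Galois-conjugate argument in the third step is a correct and rather clean substitute for the paper's Lemma~\ref{lemma:spectral_radius}: since every root of $g_j$ is $\lambda^p$ for an eigenvalue $\lambda$ of $A$ and the conjugates of $\lambda^p$ are exactly the $p$-th powers of the conjugates of $\lambda$, one does get $\rho(g_j)^{1/p}=\rho(f_i)$. Your worry about Lemma~\ref{lem_indep_of_embed} is also resolvable exactly as you sketch (an ample $\T$-invariant divisor on $X$ restricts to an ample $\T$-invariant divisor on the orbit closure $V(\sigma_1)$, whose polytope is full-dimensional in $M(\sigma_1)_\R$).

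The genuine gap is the parenthetical ``valid once the right-hand limit exists'' attached to $\alpha_\varphi(P)=\alpha_{\varphi^p}(P)^{1/p}$. The existence of $\lim_k h^+(\varphi^{kp}(P))^{1/kp}$ only controls one of the $p$ residue classes of the sequence $h^+(\varphi^{k}(P))^{1/k}$; the class $k\equiv l\pmod p$ converges to $\alpha_{\varphi^p}(\varphi^l(P))^{1/p}$, computed on a \emph{different} torus orbit $O_{\sigma_{l+1}}$, and nothing formal forces these $p$ subsequential limits to coincide. Establishing that they do is precisely the content of the Claim in the paper's proof: $\varphi^p|_{O_{\sigma_1}}$ and $\varphi^p|_{O_{\sigma_{l+1}}}$ are semiconjugate via the finite dominant monomial map $\varphi^l$ between equal-dimensional tori, and by tracking the smallest algebraic subgroups containing the respective orbits one sees the associated integer matrices are conjugate over $\Q$, hence have equal spectral radii. (Alternatively, since $\det A\ne 0$, $\varphi_{\ad(A^l)}\circ\varphi_{A^l}=\varphi_{\det(A^l)I}$ gives two-sided height comparisons $h^+(\varphi^{kp+l}(P))\asymp h^+(\varphi^{kp}(P))$.) Without some such argument you have proved existence of $\alpha_{\varphi^p}(P)$ but not of $\alpha_\varphi(P)$, which is part of the statement; once this step is supplied, the rest of your outline goes through.
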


\begin{proof}
For points in $\G_m^n(\Qbar)$, it is proved in \cite[Corollary 32]{Sil} that the set of the arithmetic degrees is exactly $\{ 1, \rho(f_1),\cdots,\rho(f_s)\}$. The height function used in the proof is the one from the embedding $\G_m^n(\Qbar)\subset \P^n$. By Lemma~\ref{lem_indep_of_embed}, it will give us the same arithmetic degrees as the embedding $\G_m^n(\Qbar)\subset X$ we use here. 

Thus, it remains to show that for $P\in X_\varphi(\Qbar)\setminus \G_\m^n(\Qbar)$, $\alpha_\varphi(P)$ exists and is in the set $\{ 1, \rho(f_1),\cdots,\rho(f_s)\}$. From the discussion in Section~\ref{section:orbits_under_monomial_maps}, we know that a point $P\in X_\varphi(\Qbar)$ must be in an orbit which is ``preperiodic'' (in the graph associated to $\varphi$. By \cite[Lemma 15]{KaSi}, it suffices to prove for points on a periodic orbit. Moreover, when the orbits become periodic, every orbit in a period has the same dimension. Hence, we can assume we are in the situation that $P\in O_0$ and $\varphi^l(P)\in O_{l}$ where $O_l=O_{l'}$ if $l\equiv l'\mod p$ for some $p>0$. Here, we use $O_l:=O_{\sigma_l}$ to denote $\T$-orbits of the same dimension.

To find the arithmetic degree of $P$ under $\varphi$, we need to look at the limit
\[
\lim_{k\to\infty}h_X(\varphi^k(P))^{1/k}.
\]
We know the subsequence $h_X(\varphi^{kp+l}(P))^{1/(kp+l)}$ converge to  $\alpha_{\varphi^p}(\varphi^l(P))$, since $\varphi^p$ is a monomial selfmap on $O_l$. Thus the sequence has $p$ limit points, and we need to show these values are in fact the same.

Without loss of generality, we consider the subsequences $h_X(\varphi^{kp}(P))^{1/(kp)}$ and $h_X(\varphi^{kp+1}(P))^{1/(kp+1)}$, which converge to $\alpha_{\varphi^p}(P)$ and $\alpha_{\varphi^p}(\varphi(P))$, respectively. The mappings can be illustrated by the following diagram, and we need to show that $\alpha_{\varphi^p}(P)=\alpha_{\varphi^p}(\varphi(P))$.
\[
\xymatrix{
P\in  O_0\ar[d]_{\varphi}\ar[rr]^{\varphi^p} & & O_0\ar[d]^{\varphi}\\
\varphi(P)\in  O_1\ar[rr]_{\varphi^p} & & O_1
}
\]
In fact, a claim which is slightly more general than the above is still true.
\begin{claim}
Let $\T$ be an algebraic torus and in the following diagram, $\varphi_A$, $\varphi_B$, and $\varphi_C$ are all dominant monomial maps $\T\to\T$. 
\[
\xymatrix{
\T\ar[d]_{\varphi_C}\ar[rr]^{\varphi_A} & & \T\ar[d]^{\varphi_C}\\
\T\ar[rr]_{\varphi_B} & & \T
}
\]
Suppose the diagram commutes, i.e., $\varphi_A$ and $\varphi_B$ are semiconjugate by $\varphi_C$. Then for $P\in\T(\Qbar)$ and $Q=\varphi_C(P)$, we have $\alpha_{\varphi_A}(P)=\alpha_{\varphi_B}(Q)$.
\end{claim}

\begin{proof}[Proof of Claim]\let\qed\relax
Observe that by \cite[Corollary 32]{Sil}, $\alpha_{\varphi_A}(P)$ can be calculated in the following way. First, find the smallest algebraic subgroup $G$ that contains the $\varphi_A$-orbit of $P$. Then the identity component $G_0$ of $G$ is invariant under $\varphi_A$, and $\alpha_{\varphi_A}(P)=\delta(\varphi_A|_{G_0})$.

Then, since $\varphi_A$, $\varphi_B$ and $\varphi_C$ are group homomorphisms, and $\varphi_B\circ \varphi_C = \varphi_C \circ \varphi_A$, one can show that $\varphi_C(G)$ is the smallest algebraic subgroup of $\T$ containing the $\varphi_B$-orbit of $Q$. In particular, $\varphi_C$ is a monomial map that maps $G_0$ onto $\varphi_C(G)_0$, the identity component of $\varphi_C(G)$. Furthermore, $G_0$ and $\varphi_C(G)_0$ are invariant under $\varphi_A$ and $\varphi_B$, respectively. Hence we have the semiconjugation on the restrictions
\[
\xymatrix{
G_0\ar[d]_{\varphi_C|_{G_0}}\ar[rr]^{\varphi_A|_{G_0}} & & G_0\ar[d]^{\varphi_C|_{G_0}}\\
\varphi_C(G)_0\ar[rr]_{\varphi_B|_{\varphi_C(G)_0}} & & \varphi_C(G)_0
}
\]
Suppose that $\varphi_A|_{G_0}$, $\varphi_B|_{\varphi_C(G)_0}$ and $\varphi_C|_{G_0}$ are monomial maps associated with integer matrices $A_0$, $B_0$, and $C_0$, respectively. Then we have $C_0 A_0 = B_0 C_0$. Over $\Q$, we have the conjugation $A_0 = C_0^{-1} B_0 C_0$, which implies that $A_0$ and $B_0$ has the same set of eigenvalues, hence the same spectral radius. This show that $\varphi_A|_{G_0}$ and $\varphi_B|_{\varphi_C(G)_0}$ have the same dynamical degree, and therefore $\alpha_{\varphi_A}(P)=\alpha_{\varphi_B}(Q)$.\hfill$\blacksquare$
\end{proof}

Hence, we know that the sequence defining the arithmetic degree of $P$ under $\varphi$ converges, and the limit is $\alpha_{\varphi^p}(P)^{1/p}$, i.e., $\alpha_{\varphi}(P)=\alpha_{\varphi^p}(P)^{1/p}$. On the other hand, $\varphi^p$ is associated with the matrix $A^p$, and 
$\varphi^p|_{O_0}$ is associated with the map $\bar{A^p}:N(\sigma_0):=N/N_{\sigma_0}\to N(\sigma_0)$ induced from $A^p$. Notice that $N_{\sigma_0}$ is invariant under $A^p$, not $A$. Applying lemma~\ref{lemma:spectral_radius}, we know that $\alpha_{\varphi^p}(P)$ is equal to $\rho(f_j)^p$ for some $j$. Therefore, 
\[ 
\alpha_{\varphi}(P)=\alpha_{\varphi^p}(P)^{1/p}=\rho(f_j)
\]
and the proof of the theorem is complete.
\end{proof}

Next, we will answer two questions of Silverman. The first question is \cite[Question 13]{Sil}. It asks for a description of the set
\[
\{ Q\in\P^n(\Qbar)_\varphi\ :\ \alpha_\varphi(Q)=\alpha_\varphi(P) \}.
\]
We will describe a slightly different set for monomial maps. Let $\varphi=\varphi_A:X\dashrightarrow X$ be a monomial map. Order the possible arithmetic degrees as
\[
1=\rho_0 < \rho_1 < \cdots < \rho_s.
\]
We will describe the set
\[
Z_j := \{ Q\in \G^n_\m(\Qbar)\ :\ \alpha_\varphi(Q)\le \rho_j \},
\]
for $j=0,1,\cdots,s$.

Let $\frak L$ be the set of sub-lattices of $M$ that are invariant under $\transp{A}$, i.e.,
\[
{\frak L} = \{ L\subset M\ :\ \text{$L$ is a saturated sublattice and $\transp{A}L\subset L$} \}.
\]
For $L\in {\frak L}$, $\transp{A}$ induces a linear selfmap 
\[
\overline{\transp{A}}: M/L \longrightarrow M/L.
\]
Let $r(L)$ denote the spectral radius of $\overline{\transp{A}}$.

\begin{prop}
\[
Z_j = \bigcup_{L\in{\frak L},\, r(L)\le\rho_j} \G_L^\div
\]
\end{prop}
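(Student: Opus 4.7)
My approach is to compute $\alpha_\varphi(P)$ via the smallest $\varphi$-invariant algebraic subgroup containing $\orb_\varphi(P)$, and then match that subgroup against the index set of the proposed union. For $P \in \G^n_\m(\Qbar)$, let $H(P)$ be the smallest algebraic subgroup of $\T$ containing $\orb_\varphi(P)$. Since $\varphi^{-1}(H(P))$ is another algebraic subgroup containing the orbit, minimality forces $\varphi(H(P)) \subseteq H(P)$, so the identity component $G(P) := H(P)^\circ$ is a $\varphi$-invariant subtorus. Writing $G(P) = \G_{L(P)}$ for the associated saturated $\transp A$-invariant sublattice $L(P) \in \frak L$, the finiteness of $H(P)/G(P)$ gives some $k\ge 1$ with $P^k \in G(P)$, hence $P \in \G_{L(P)}^\div$. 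The argument in the proof of Theorem~A (following \cite[Corollary~32]{Sil}) then yields $\alpha_\varphi(P) = \delta(\varphi|_{G(P)}) = \rho\bigl(\overline{\transp A}|_{M/L(P)}\bigr) = r(L(P))$.

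These two facts immediately give the inclusion $Z_j \subseteq \bigcup_{L,\, r(L)\le\rho_j} \G_L^\div$: for $P \in Z_j$, the lattice $L(P)$ is allowed, and $P \in \G_{L(P)}^\div$. For the reverse inclusion, take $P \in \G_L^\div$ with $L \in \frak L$ and $r(L) \le \rho_j$, and fix $k$ with $P^k \in \G_L$. Since $\G_L$ is $\varphi$-invariant, $\varphi^m(P)^k = \varphi^m(P^k) \in \G_L$ for every $m \ge 0$; and since the subtorus $\G_L$ is divisible, each $\varphi^m(P)$ can be written as $g_m \cdot \zeta_m$ with $g_m \in \G_L$ and $\zeta_m \in \T[k] := \{x\in\T : x^k=1\}$. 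Hence $\orb_\varphi(P) \subseteq \G_L \cdot \T[k]$, which is an algebraic subgroup, so by minimality $H(P) \subseteq \G_L \cdot \T[k]$. As $\T[k]$ is finite, $G(P) \subseteq (\G_L \cdot \T[k])^\circ = \G_L$, i.e.\ $L \subseteq L(P)$. The quotient $M/L \twoheadrightarrow M/L(P)$ is $\overline{\transp A}$-equivariant, so spectral radius is non-increasing, giving $r(L(P)) \le r(L) \le \rho_j$ and hence $\alpha_\varphi(P) \le \rho_j$.

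The main technical point I expect to need care on is the implication ``$P^k \in \G_L \Rightarrow \orb_\varphi(P) \subseteq \G_L \cdot \T[k]$'', which combines the $\varphi$-invariance of $\G_L$ with the divisibility of $\G_L$ as a subtorus (to extract $k$-th roots inside $\G_L$ and pass from $\varphi^m(P)^k \in \G_L$ to $\varphi^m(P) \in \G_L \cdot \T[k]$). Beyond that, the argument rests on the correspondence between saturated $\transp A$-invariant sublattices of $M$ and $\varphi$-invariant subtori of $\T$, and on the elementary fact that spectral radius does not increase under passage to a quotient linear map.
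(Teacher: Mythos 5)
Your proof is correct and follows essentially the same route as the paper: compute $\alpha_\varphi(P)$ via the identity component of the smallest algebraic subgroup containing the orbit (Silverman's Corollary~32) and translate between $\transp{A}$-invariant sublattices and $\varphi$-invariant subtori. If anything, your treatment of the reverse inclusion is more careful than the paper's, which asserts $\alpha_\varphi(Q)=\delta(\varphi|_{\G_L})$ where only the inequality $\alpha_\varphi(Q)\le r(L)$ holds and is needed; your quotient spectral-radius argument supplies exactly that.
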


\begin{proof}
Suppose $Q\in \G_L^\div$ for some $L$ with $r(L)\le \rho_j$, then the smallest algebraic subgroup containing the $\varphi$-orbit of $Q$ is a subgroup of $\G_L^\div$. Thus $\alpha_\varphi(Q)=\delta(\varphi|_{\G_L})$. By definition, $L$ is the subgroup of $M$ consisting of the characters which vanish (i.e., $=1$) on $\G_L$. Hence, the character lattice of $\G_L$ is the quotient lattice $M/L$ and the monomial map $\varphi|_{\G_L}$ corresponds to the linear map induced by $\transp{A}$, that is, $\overline{\transp{A}}$. Therefore, 
\[
\alpha_\varphi(Q)=\delta(\varphi|_{\G_L})=\text{(spectral radius of $\overline{\transp{A}}$)} < \rho_j,
\]
which implies $Q\in Z_j$.

Conversely, for $Q\in Z_j$, suppose the smallest algebraic subgroup which contains the $\varphi$-orbit of $Q$ is $\G_L$ for a sublattice $L\subset M$. Then $\G_L$ is invariant under $\varphi$ implies that $L$ is invariant under $\transp{A}$. Then the saturation $\tilde{L}$ is a lattice in $\frak L$ and $r(\tilde{L})<\rho_j$. This completes the proof.
\end{proof}

The second question we are going to answer is \cite[Question 15]{Sil}, and we answer by providing a counterexample. Recall that, for a rational map $\varphi: \P^n \dashrightarrow\P^n$, assuming the conjecture that $\deg(\varphi^k)\asymp k^\ell \delta(\varphi)^k$ for some integer $\ell\ge 0$  (the conjecture is true for monomial maps), Silverman defined the canonical height $\hat{h}_\varphi(P)$ for $P\in\P^n(\Qbar)$ by
\[
\hat{h}_\varphi(P)=\limsup_{k\to\infty} \frac{h(\varphi^k(P))}{k^\ell\delta(\varphi)^k}.
\]
He then asked the question whether the sequence $\left\{ \frac{h(\varphi^k(P))}{k^\ell \delta(\varphi)^k} \right\}_{k=1}^{\infty}$ in the limit has only finitely many accumulation points. The following example shows a rational map where the sequence accumulates on a whole interval.

Following \cite{Sil}, we make the following definitions to simplify notations. For a number field $\kappa$, $P=(x_1,\cdots,x_n)\in\kappa^n$, and $v\in M_\kappa$, we define $\log\norm{Q}_v$ to be the column vector 
\[
\log\norm{Q}_v:= \transp{\bigl(\log\norm{x_1}_v,\cdots, \log\norm{x_n}_v\bigr)}.
\]
by defining this notation, we have the following equation
\[
\log\norm{\varphi_A(P)}_v = A \log\norm{P}_v.
\]
Also, for a real vector $\mathbf{a}=(a_1,\cdots,a_n)$, we define
$\max^+(\mathbf{a}):=\max\{0,a_1,\cdots,a_n\}$.
 
\begin{ex}
Pick any $2\times 2$ integer matrix $A$ such that the eigenvalues of $A$ is a pair of conjugate complex numbers $\mu,\bar{\mu}$ such that $\mu/\bar{\mu}$ is not a root of unity (i.e. the argument of $\mu$ is not a rational multiple of $2\pi$). 
Such matrix $A$ is similar to a real matrix of the form $\delta\cdot
\left(\begin{smallmatrix}
\cos\theta & -\sin\theta \\ \sin\theta & \cos\theta
\end{smallmatrix}\right).
$ where $\delta$ is the modulus of $\mu$ (hence is also the dynamical degree of $\varphi_A$) and $\theta$ is the argument of $\mu$.

For a point $P=(x,y)\in (\bar{\Q}^*)^2$, in order to compute its canonical height, we first fix a number field $\K$ containing $x,y$. Next, we consider the sequence $\{\frac{1}{\delta^i} h(\phi_A^i(P))\}_{i=0}^\infty$ in the definition of canonical height. We have
\[
\frac{1}{\delta^k} h(\varphi_A^k(P))
=\frac{1}{\delta^k\cdot [\K:\Q]}\sum_{v\in M_\K} \maxp (A^k \log\norm{P}_v)
=\frac{1}{[\K:\Q]}\sum_{j=1}^l \maxp \left(
(\frac{A}{\delta})^k 
\log\norm{P}_j\right).
\]
The first equality is by definition. For the second equality, note that for each point $P$, the sum in the height definition is in fact finite, i.e. $\log\norm{P}_v\ne 0$ for only finitely many $v\in M_\K$. We use $\log\norm{P}_j$, $j=1,\cdots, l$ to denote those values.

We define $R_\phi:= 
\left(\begin{smallmatrix}
\cos\phi & -\sin\phi \\ \sin\phi & \cos\phi
\end{smallmatrix}\right)
$
to be the rotation matrix with angle $\phi$.
Notice that $A/\delta$ is then similar to the matrix $R_\theta$, 
say $A/\delta = B R_\theta B^{-1}$ for some invertible matrix $B$.
Moreover, we also know that $(A/\delta)^k$ is similar to $R_{k\theta}$.

\begin{lem}
Let $\theta$ be an angle such that $\theta/\pi\not\in\Q$, and $\v_j=(x_j,y_j)$, $j=1,\cdots,l$, be finitely many nonzero vectors in $\R^2$. Treating $(\v_1,\cdots,\v_l)$ as a point in $\R^{2l}$, then for any continuous function $f:\R^{2l}\to\R$, the closure of the set
\[
\bigl\{
f(R_{k\theta}\v_1,\cdots, R_{k\theta}\v_l)
\bigr\}_{k=0}^\infty
\]	
is either a point, or a close interval in $\R$.	
\end{lem}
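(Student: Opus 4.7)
The plan is to pull the problem back to the circle and use a standard density/connectedness argument. Since $\theta/\pi \notin \Q$, the sequence $\{k\theta \bmod 2\pi\}_{k=0}^{\infty}$ is dense in $[0, 2\pi)$ by Weyl (or just Kronecker). So I will first identify the closure of the ``full'' orbit in $\R^{2l}$ before applying $f$.

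Define the continuous map
\[
\Phi: \R/2\pi\Z \longrightarrow \R^{2l}, \qquad \phi \longmapsto (R_\phi \v_1,\, R_\phi \v_2,\, \ldots,\, R_\phi \v_l),
\]
and set $C := \Phi(\R/2\pi\Z)$. Since $\Phi$ is continuous and $\R/2\pi\Z$ is compact and connected, $C$ is a compact connected subset of $\R^{2l}$ (it is a ``diagonal'' curve, in general not a single circle because the $\v_j$ need not have the same length). The orbit $\{\Phi(k\theta)\}_{k=0}^{\infty}$ is the $\Phi$-image of a dense subset of $\R/2\pi\Z$; by continuity of $\Phi$ it is therefore dense in $C$.

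Now apply $f$. Because $f$ is continuous, the sequence
\[
\bigl\{ f(R_{k\theta}\v_1,\ldots,R_{k\theta}\v_l) \bigr\}_{k=0}^{\infty} = \bigl\{ (f\circ \Phi)(k\theta) \bigr\}_{k=0}^{\infty}
\]
is a dense subset of $(f\circ \Phi)(\R/2\pi\Z) = f(C)$. Hence its closure in $\R$ equals $f(C)$.

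It remains to observe that $f(C)$ is a compact connected subset of $\R$, since $C$ is compact connected and $f$ is continuous. The compact connected subsets of $\R$ are exactly the single points and the closed bounded intervals, which is the stated dichotomy. No real obstacle arises here; the only mild subtlety is that $C$ need not be a smooth circle in $\R^{2l}$ (the $\v_j$ may have different norms, producing a ``Lissajous-like'' curve), but all that is used is that $C$ is the continuous image of $S^1$, so compactness and connectedness are automatic.
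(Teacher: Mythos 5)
Your proof is correct and follows essentially the same route as the paper: parametrize the orbit by the circle via $\phi\mapsto(R_\phi\v_1,\dots,R_\phi\v_l)$, use density of $\{k\theta\}$ in $\R/2\pi\Z$ (irrational rotation) together with continuity to identify the closure of the sequence with the image of the whole circle, and conclude because a compact connected subset of $\R$ is a point or a closed interval. The only (harmless) difference is that the paper notes the image curve is actually \emph{homeomorphic} to a circle, whereas you correctly observe that mere compactness and connectedness of the continuous image suffice.
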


\begin{proof}
Since $\theta/\pi\not\in\Q$, we know that for $\v_1$, the set
\[
S=\overline{\{R_{k\theta}\v_1\}_{k=0}^\infty } 
= \{ R_{\phi}\v_1\}_{\phi\in\R/(2\pi\Z)}
\]
is (topologically) a circle. Moreover, once we know a point $p\in S$, then we can find the corresponding $\phi$. This observation implies that the closure of the set 
\[
\bigl\{
(R_{k\theta}\v_1,\cdots, R_{k\theta}\v_l)
\bigr\}_{k=0}^\infty
\]
is also homeomorphic to a circle. Since $f$ is continuous, the image of a circle is either a point or a closed interval in $\R$.
\end{proof}

Now, since $(A/\delta)$ is similar to $R_\theta$, the same conclusion will hold for $(A/\delta)$, with $\v_j=\log\norm{P}_j$ and $f$ be the sum of the $\max^+$ function. For a point $P$ such that the function 
\[
g(\theta):= f(B R_\theta B^{-1}\v_1,\cdots, B R_\theta B^{-1}\v_l)= \sum_{j=1}^l \maxp \left(
B R_\theta B^{-1}
\log\norm{P}_j\right)
\]
is not a constant function on the circle, the image of $f$ is a close interval. This means the original sequence $\{\frac{1}{\delta^k} h(\phi_A^k(P))\}_{k=0}^\infty$ has the whole interval as its limit points.
\end{ex}



\section{Preperiodic points of monomial maps}
\label{section:preperiodic_points}
In this subsection we describe the set of preperiodic points for an arbitrary monomial map. The major case is already described in~\cite{Sil}.

\subsection{Preperiodic points on algebraic tori}
\label{section:preper_tori}

We first describe the preperiodic points of monomial maps on an algebraic torus. First we need a lemma. Notice that the first part of the lemma is already proved in \cite[Lemma 22]{Sil}.

\begin{lem}
\label{lemma:kernel_mono_maps}
Let $A=(a_{ij})\in\Mat^+_n(\Z)$ be a matrix and let the set
\[
Z=\{Q\in\T\ |\ \varphi_A(Q)=\one \}.
\]
\begin{enumerate}
\item If $D=\det(A)\ne 0$, and $P=(x_1,\cdots,x_n)\in Z$, then each $x_i$ is a $D$-th root of unity.

\item If $D=0$, then $Z$ is the algebraic subgroup $\G_L$ of $\T$ corresponding to the lattice $L\subset M$ generated by the row vectors of $A$. In particular, $\dim(Z)=n-\rank(L)=n-\rank(A)$.
\end{enumerate}
\end{lem}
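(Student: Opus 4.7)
The plan is that both parts follow from a single elementary reformulation of the kernel equation, after which part~(1) is an application of the adjugate identity and part~(2) is just the standard codimension formula for subtori. I would first unwind $\varphi_A(P)=\one$ coordinate by coordinate: writing $\aa_i=(a_{i1},\ldots,a_{in})$ for the $i$-th row of $A$, the condition reads $P^{\aa_i}=1$ for $i=1,\ldots,n$, which by the defining equations of $\G_L$ recalled in Section~2.2 says exactly $P\in\G_L$, where $L\subset M$ is the sublattice spanned by $\aa_1,\ldots,\aa_n$. Hence $Z=\G_L$ holds with no hypothesis on $\det(A)$; both parts of the lemma then just extract further information from this equality.

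For part~(1), where $D=\det(A)\neq 0$, one may cite \cite[Lemma 22]{Sil}. A self-contained route uses the adjugate relation $\mathrm{adj}(A)\cdot A = D\cdot I_n$: letting $B=\mathrm{adj}(A)\in\Mat_n(\Z)$, the $k$-th row of $BA$ is $D e_k$, so $D e_k$ is a $\Z$-linear combination of the rows of $A$ and therefore lies in $L$. Consequently every $P\in\G_L$ satisfies $x_k^D = P^{D e_k} = 1$ for each $k$, giving the claim.

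For part~(2) only the dimension statement remains, since $Z=\G_L$ is already in hand. The rank of $L$ as an abelian group equals $\rank(A)$, because $L$ is the row span. Writing $\widetilde L$ for the saturation of $L$ in $M$, the identity component of $\G_L$ is the subtorus $\G_{\widetilde L}$, cut out by $\rank(\widetilde L)=\rank(L)$ independent monomial equations from a $\Z$-basis of $\widetilde L$; the codimension formula for subtori recalled in Section~2.2 then yields $\dim\G_{\widetilde L} = n - \rank(L)$, and $\dim Z$ equals this. I foresee no real obstacle; the only point of mild care is that $\G_L$ may be disconnected when $L$ is not saturated, but dimension sees only the identity component and $L,\widetilde L$ share the same rank, so the stated formula is unaffected.
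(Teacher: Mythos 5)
Your proposal is correct and follows essentially the same route as the paper: part (1) rests on the adjugate identity $\mathrm{adj}(A)\cdot A=D\cdot I_n$ (you phrase it as $De_k$ lying in the row lattice $L$, the paper phrases it as composing $\varphi_{\mathrm{adj}(A)}$ with $\varphi_A$, which is the same computation), and part (2) is the observation that $Z=\bigcap_i\{\chi^{\aa_i}=1\}=\G_L$ together with the codimension formula from Section~2.2. Your remark that $Z=\G_L$ holds unconditionally, and your explicit handling of the disconnectedness of $\G_L$ via the saturation, are fine and consistent with the paper.
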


\begin{proof} 
For part (1), let $\ad(A)$ be the classical 
adjoint of $A$, then 
\[
\ad(A)\cdot A = D\cdot I_n.
\]
Now suppose $P\in Z$, apply $\varphi_{\ad(A)}$ on both sides of the equation $\varphi_{A}(P)=\one$, we obtain
\[
\varphi_{\ad(A)}(\varphi_{A}(P))=\varphi_{\ad(A)}(\one) = \one.
\]
Furthermore, we calculate the left side of the equation
\[
\begin{split}
\varphi_{\ad(A)}(\varphi_{A}(P)) &= \varphi_{\ad(A)}\circ \varphi_{A}(P)\\
                                 &= \varphi_{\ad(A)\cdot A}(x_1,\cdots,x_n) \\
                                 &= \varphi_{D\cdot I}(x_1,\cdots,x_n) \\
                                 &= (x_1^D,\cdots,x_n^D)= \one.
\end{split}
\]
Therefore, each $x_i$ is a $D$-th root of unity. 

For (2), let $u_i = (a_{ij})$ be the $i$-th row vector of $A$, then by definition,
\[
Z= \bigcap_{i=1}^n \{\chi^{u_i}=1\}=\G_L. 
\]
\end{proof}

\begin{remark}
Notice that the converse of (1) in Lemma~\ref{lemma:kernel_mono_maps} is not true. For example, let $A=\left(\begin{smallmatrix} 2 & 0 \\ 0 & 3\end{smallmatrix}\right)$ then $D=6$, but for a primitive sixth root of unity $w$, $\varphi_A(w,w)\ne\one$. 
\end{remark}

\begin{prop}
\label{prop:preper_points_mono_maps}
Let $A=(a_{ij})\in\Mat^+_n(\Z)$ be a matrix and $\varphi_A$ be the associated monomial map.
\begin{enumerate}
\item If none of the eigenvalues of $A$ is a root of unity, then $\PrePer(\varphi_A)=\T_{\tor}$.
\item If some eigenvalue of $A$ is a root of unity, then there is a nontrivial algebraic subgroup $G\subset\T$ such that $\PrePer(\varphi_A)=G^{\div}$.
\end{enumerate}
\end{prop}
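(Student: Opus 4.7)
The plan is to rewrite preperiodicity as a kernel condition and then apply Lemma~\ref{lemma:kernel_mono_maps}. A point $P\in\T$ satisfies $P\in\PrePer(\varphi_A)$ if and only if $\varphi_A^{k+p}(P)=\varphi_A^k(P)$ for some $k\ge 0,\, p\ge 1$, equivalently $P\in\ker(\varphi_{A^{k+p}-A^k})$. For part (1), if no eigenvalue of $A$ is a root of unity, then
\[
\det(A^{k+p}-A^k) \,=\, \det(A)^k\prod_{i}(\lambda_i^p-1)\,\ne\,0
\]
for every $k,p$, so Lemma~\ref{lemma:kernel_mono_maps}(1) forces every preperiodic point into $\T_{\tor}$. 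The reverse inclusion is immediate, since $\varphi_A$ preserves each finite group of $d$-torsion points.

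For part (2), let $m$ be the least common multiple of the orders of the root-of-unity eigenvalues of $A$, and set $G:=\ker(\varphi_{A^m-I})$. By Lemma~\ref{lemma:kernel_mono_maps}(2), $G=\G_{L_m}$ for the lattice $L_m\subset M$ generated by the rows of $A^m-I$, and $\dim G = n-\rank(A^m-I)\ge 1$ since $\det(A^m-I)=0$; hence $G$ is nontrivial. The inclusion $G^{\div}\subset\PrePer(\varphi_A)$ is short: Proposition~2.1(1) lets us write any $P\in G^{\div}$ as $P=R\cdot\zeta$ with $R\in G$ fixed by $\varphi_A^m$ and $\zeta\in\T_{\tor}$ with finite forward orbit, so the full $\varphi_A^m$-orbit of $P$ is finite.

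For the reverse, given $P\in\PrePer(\varphi_A)$ with $\varphi_A^{k+p}(P)=\varphi_A^k(P)$, applying the homomorphism $\varphi_A^k$ to $\varphi_A^p(P)/P$ gives $\one$, so $\varphi_A^p(P)/P\in\ker(\varphi_A^k)\subset\T_{\tor}$ by Lemma~\ref{lemma:kernel_mono_maps}(1). If $N$ is its order, then $P^N\in G_p:=\ker(\varphi_{A^p-I})$. Granting the claim $G_p\subset G^{\div}$ for every $p\ge 1$, this yields $P^N\in G^{\div}$, and since the divisible hull is idempotent, $P\in G^{\div}$.

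The main obstacle is this last claim $G_p\subset G^{\div}$ for arbitrary $p$, which reduces to linear algebra for $A$. I would show that $\ker(A^p-I)\subset\ker(A^m-I)$ as subspaces of $\bar\Q^n$: an eigenvector with eigenvalue $\lambda$ contributes to $\ker(A^p-I)$ only when $\lambda^p=1$, which forces $\lambda$ to be a root of unity of order dividing $m$, so $\lambda^m=1$. A short case check handles the Jordan-block contributions for root-of-unity eigenvalues (both $(J_k(\lambda))^p-I$ and $(J_k(\lambda))^m-I$ have kernel equal to the eigenline when $k\ge 2$, and to the whole block when $k=1$). Taking orthogonal complements gives $V_m\subset V_p$ for the $\Q$-row spans, so $\tilde L_m\subset\tilde L_p$ in $M$. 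Proposition~2.1 parts (2) and (4) then yield
\[
G^{\div} = \G_{L_m}^{\div} = \G_{\tilde L_m}^{\div} \supset \G_{\tilde L_p}^{\div} = \G_{L_p}^{\div} \supset G_p,
\]
completing the plan.
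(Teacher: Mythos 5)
Your proof is correct and takes essentially the same approach as the paper's: you identify $G$ as $\ker(\varphi_{A^{m}-I})$ (your $m$ plays the role of the paper's exponent $k_0$ minimizing $\rank(A^{k_0}-I)$), translate preperiodicity into the kernel condition $\varphi_{A^{k+p}-A^{k}}(P)=\one$ and apply Lemma~\ref{lemma:kernel_mono_maps}, and then reduce the inclusion $\ker(\varphi_{A^{p}-I})\subset G^{\div}$ to a Jordan-form comparison of row spaces combined with parts (2) and (4) of the divisible-hull proposition in Section 2.3. The only differences are cosmetic: you first extract the torsion element $\varphi_A^{p}(P)/P$ and pass to $P^{N}$ where the paper works directly with the row space of $A^{s}(A^{r-s}-I)$, and you spell out the Jordan-block computation that the paper leaves as a parenthetical remark.
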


\begin{remark}
Notice that $G^{\div}=G\cdot\T_{\tor}$, thus (1) is in fact the special case of (2) for $G=\{\one\}$, the trivial subgroup.
\end{remark}

\begin{proof}
Part (1) is proved in \cite[Proposition 21 (d)]{Sil}. To prove (2), we will first explain how to construct the group $G$ in next two paragraphs.

Suppose $A\in \Mat^+_n(\Z)$ is a matrix which has some roots of unity as its eigenvalues, then for some $k\ge 1$ the matrix $A^k-I$ is singular, i.e., $\rank(A^k-I)<n$. We look at the Jordan form of $A$. Let $B$ be a single Jordan block of $A$ of an eigenvalue $w$ which is a primitive $k$-th root of unity. That is, $B$ is of the form
\[
B=\begin{pmatrix}
w & 1 & 0 & \cdots & 0 \\ 
0 & w & 1 & \cdots & 0 \\
\vdots & \vdots & \vdots& \ddots & \vdots\\
0 & 0 & 0 & \cdots & 1 \\
0 & 0 & 0 & \cdots & w \\
\end{pmatrix}
\]
Then for every positive integer $l$, we have 
\[
B^{kl} - I =\begin{pmatrix}
0 & kl & * & \cdots & * \\ 
0 & 0 & kl & \cdots & * \\
\vdots & \vdots & \vdots& \ddots & \vdots\\
0 & 0 & 0 & \cdots & kl \\
0 & 0 & 0 & \cdots & 0 \\
\end{pmatrix}
\]
Thus, the rank of $B^{kl}-I$ is its size minus one. Moreover, for any integer $s\ge 0$, $B^s(B^{kl}-I)$ has the same rank as $B^{kl}-I$; and for any other Jordan block whose corresponding eigenvalue is not a $kl$-th root of unity, $B^s(B^{kl}-I)$ will have full rank.

Now we observe the matrix $A$. Write $A$ into the Jordan form as $A=PJP^{-1}$. Apply the argument in the previous paragraph to Jordan blocks in $J$, we see that for some $k_0$, the rank $\rank(A^{k_0}-I)$ reaches its minimum. More precisely, let $\ell$ denote the number of Jordan blocks whose associated eigenvalues are roots of unity, then 
\[
\rank(A^{k_0}-I)= n - \ell.
\]
Let $L(A):= M\cap \row(A^{k_0}-I)$, i.e. the intersection of $M$ and the row space of $A^{k_0}-I$. We claim that
\[
\PrePer(\varphi_A) = \G_{L(A)}^{\div},
\]
i.e., the group $G$ in the proposition is $\G_{L(A)}$.

Suppose that $P\in \PrePer(\varphi_A)$, then $\varphi_{A^{r}}(P)=\varphi_{A^{s}}(P)$ for some $r > s \ge 0$. This implies that 
$\varphi_{A^{r}-A^s}(P)=\varphi_{A^{s}(A^{r-s}-I)}(P)=\one$. If 1 is not an eigenvalue of $A^{r-s}$, then by part (1) of Lemma~\ref{lemma:kernel_mono_maps}, each coordinate of $P$ is a root of unity, hence $P\in \G_{L(A)}^{\div}$. If 1 is an eigenvalue of $A^{r-s}$, then the row space (over $\R$) of $A^{s}(A^{r-s}-I)$ contains the row space of $A^{k_0}-I$ as a subspace (this can be seen using the Jordan form of $A$). Hence by part (2) of Lemma~\ref{lemma:kernel_mono_maps},
\[
P\in \G_{A^{r}-A^s} \subset \G_{A^{r}-A^s}^{\div}\subset \G_{A^{k_0}-I}^{\div} = \G_{L(A)}^{\div}.
\]

Conversely, suppose that $P\in \G_{L(A)}^{\div}=\G_{L(A)}\cdot \T_{\tor}$. By Lemma~\ref{lemma:kernel_mono_maps}, part (2), one can conclude that $\varphi_{A^{k_0}-I}(P)=\bmu$ for some $\bmu\in\T_{\tor}$, i.e. all coordinates of $\bmu$ are roots of unity. Expanding the equality, we get $\varphi_A^{k_0}(P)=\varphi_{A^{k_0}}(P)=\bmu P$. This implies that the orbit of $P$ is finite. Therefore $P\in\PrePer(\varphi_A)$.
\end{proof}

\subsection{Preperiodic points on toric varieties}
\label{section:preper_toric_var}
For a monomial selfmap $\varphi=\varphi_A$ on a toric variety $X=X(\fan)$, we can again use the $\T$-orbit decomposition discussed in Section~\ref{section:orbits_under_monomial_maps} to find the preperiodic points on $X_\varphi$. We know $X_\varphi$ is a toric variety, thus can also be written as the disjoint union of finitely many $\T$-orbits, and each orbit itself is an algebraic torus. There are two kinds of $\T$-orbits, as follows.

The first kind is the `periodic' orbit, i.e., an orbit $O_\sigma$ such that $\varphi^p(O_\sigma)=O_\sigma$ for some integer $p\ge 1$. For a point $P\in O_\sigma$, $P$ is preperiodic for $\varphi$ if and only if $P$ is preperiodic for $\varphi^p$, and $\varphi^p: O_\sigma\to O_\sigma$ is a monomial selfmap when we view $O_\sigma$ as an algebraic torus. Since this is the case discussed in the last section, we are done.

The second kind of orbit is ``preperiodic but not periodic''. This means an orbit $O_\sigma$ such that $\varphi^k(O_\sigma)\ne O_\sigma$ for all $k=1,2,\cdots$, but $\varphi^k(O_\sigma)$ is periodic, i.e. of the first kind, for some $k$. Let us pick the smallest such $k$ and denote $\psi=\varphi^k$ and $O_\tau=\psi(O_\sigma)$. Notice that by Proposition~\ref{prop:preper_points_mono_maps}, the set of preperiodic points of $\varphi$ in $O_\tau$ is of the form $G^\div$ for some algebraic subgroup $G$ (possibly trivial) of $O_\tau$. Moreover, a point $P\in O_\sigma$ is preperiodic if and only if $\psi(P)$ is preperiodic, i.e. $\psi(P)\in G^\div$, or $P\in \psi^{-1}(G^\div)$. Finally, since $\psi$ is a group homomorphism, one has the equality that $\psi^{-1}(G^\div)=\psi^{-1}(G)^\div$. 

We sum up our discussion as follows.

\begin{prop}
Let $\varphi:X \dashrightarrow X$ be a monomial selfmap on a toric variety $X$. If we write $X_\varphi$, which is also a toric variety, as a disjoint union of orbits $X_\varphi = \bigsqcup O_\sigma$. Then for each $O_\sigma$, $\PrePer(\varphi)\cap O_\sigma = G_\sigma^\div$ for some algebraic subgroup $G_\sigma$ of $O_\sigma$, i.e., 
\[
\PrePer(\varphi)=\bigsqcup G_\sigma^\div.
\]
\hfill\qed
\end{prop}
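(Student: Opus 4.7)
The plan is to assemble the ingredients laid out in the paragraph preceding the statement. First, by the orbit--cone correspondence together with the description of $X_\varphi = X(\widetilde{\fan}_A)$ as a toric variety, I decompose $X_\varphi = \bigsqcup O_\sigma$ into its $\T$-orbits. The directed graph description of the dynamics combined with the pigeonhole principle (finitely many cones) guarantees that every orbit $O_\sigma \subset X_\varphi$ is eventually periodic, so there exists a smallest $k_\sigma \geq 0$ and a periodic orbit $O_\tau$ (of some period $p$) with $\varphi^{k_\sigma}(O_\sigma) = O_\tau$. I then analyze the two cases $k_\sigma = 0$ and $k_\sigma > 0$ separately.

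If $k_\sigma = 0$, then $O_\sigma$ is itself periodic with some period $p \geq 1$, and by the orbit analysis of Section~\ref{section:orbits_under_monomial_maps} the restriction $\varphi^p|_{O_\sigma} \colon O_\sigma \to O_\sigma$ is a monomial selfmap of the algebraic torus $O_\sigma$. Proposition~\ref{prop:preper_points_mono_maps} then produces an algebraic subgroup $G_\sigma \subset O_\sigma$ with $\PrePer(\varphi^p|_{O_\sigma}) = G_\sigma^\div$. Since $\PrePer(\varphi)\cap O_\sigma = \PrePer(\varphi^p|_{O_\sigma})$ (preperiodicity for $\varphi$ and for any fixed positive iterate agree, and here $\varphi^p$ preserves $O_\sigma$), this handles the periodic case.

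If $k_\sigma > 0$, set $\psi := \varphi^{k_\sigma}$. Then $\psi|_{O_\sigma} \colon O_\sigma \to O_\tau$ is a surjective homomorphism of algebraic tori (this is the monomial map $\varphi_{\bar A}$ attached to the induced lattice map). By the previous case applied to $O_\tau$, there is an algebraic subgroup $G_\tau \subset O_\tau$ with $\PrePer(\varphi)\cap O_\tau = G_\tau^\div$. A point $P \in O_\sigma$ is preperiodic for $\varphi$ iff $\psi(P)$ is, so $\PrePer(\varphi)\cap O_\sigma = (\psi|_{O_\sigma})^{-1}(G_\tau^\div)$, and I define $G_\sigma := (\psi|_{O_\sigma})^{-1}(G_\tau)$. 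This is an algebraic subgroup of $O_\sigma$ because it is the scheme-theoretic preimage of an algebraic subgroup under a morphism of algebraic groups.

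The one point that needs explicit verification is the identity $(\psi|_{O_\sigma})^{-1}(G_\tau^\div) = G_\sigma^\div$. This I would prove by chasing the definition of divisible hull through the homomorphism identity $\psi(P^m) = \psi(P)^m$: for $P \in O_\sigma$, the condition $\psi(P)^m \in G_\tau$ for some $m \geq 1$ is equivalent to $P^m \in G_\sigma$ for the same $m$, which gives the two inclusions. I do not expect a real obstacle here; the only care needed is to keep straight that ``preperiodic for $\varphi$'' on $O_\tau$ is the content of the periodic case (via the period $p$ of $O_\tau$), and that pulling back along $\psi$ commutes with taking divisible hulls. Together the two cases give $\PrePer(\varphi) = \bigsqcup_\sigma G_\sigma^\div$, as required.
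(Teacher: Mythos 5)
Your proposal is correct and follows essentially the same route as the paper: decompose $X_\varphi$ into $\T$-orbits, split them into periodic and strictly preperiodic ones, apply Proposition~\ref{prop:preper_points_mono_maps} to $\varphi^p$ on the periodic tori, and pull back along $\psi=\varphi^{k_\sigma}$ using the identity $\psi^{-1}(G^\div)=\psi^{-1}(G)^\div$. The only difference is that you spell out the verification of this last identity via $\psi(P^m)=\psi(P)^m$, which the paper merely asserts as a consequence of $\psi$ being a group homomorphism.
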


\subsection{The Dynamical Manin-Mumford Problem for Monomial Maps}
\label{section:DMM}
Preperiodic points of a rational map is closely related to the dynamical Manin-Mumford problem. 
We will consider the problem for a monomial map $\varphi=\varphi_A$ on an algebraic torus $\G^d_\m(\C)$ first. From the discussion in the previous section, we know that the set of preperiodic points under $\varphi$ in $\G^d_\m(\C)$ has the form $G^\div$, where $G$ is an algebraic subgroup of $\G^d_\m(\C)$. For the special case of $G=\{0\}$, $G^\div$ is in fact the set of torsion points. In this case, the dynamical Manin-Mumford problem becomes the original Manin-Mumford problem on algebraic tori, which is proved by Laurent.

\begin{thm}[Laurent]
If $Y\subset\G_\m^d$ is an irreducible variety, containing a Zariski dense set of torsion points of $\G_\m^d(\Qbar)$, then $Y$ is a torsion translate of an algebraic subgroup of $\G_\m^d$.
\end{thm}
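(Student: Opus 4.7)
I would prove this by induction on $\dim Y$; the base case $\dim Y = 0$ is trivial, so assume $\dim Y \geq 1$ and fix a number field $K$ over which $Y$ is defined. The first step extracts an algebraic constraint from the Galois action on torsion. Because $\mathrm{Gal}(\Qbar/K)$ acts on $\mu_n^d$ through a subgroup of $(\Z/n\Z)^*$ of index bounded independently of $n$, there are infinitely many primes $p$ together with $\sigma_p \in \mathrm{Gal}(\Qbar/K)$ acting on every torsion point of order coprime to $p$ by $\zeta \mapsto \zeta^p$. For any such $\zeta \in Y$, one has $[p](\zeta) = \sigma_p(\zeta) \in Y$ since $Y$ is $K$-rational. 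Torsion of $p$-power order lies in a countable union of proper closed algebraic subgroups of $\G_\m^d$ and cannot be dense in the irreducible positive-dimensional $Y$; hence torsion of order coprime to $p$ remains dense in $Y$. It follows that $Y \subseteq [p]^{-1}(Y)$, and since $[p]$ is a finite surjective endomorphism of $\G_\m^d$, dimension count plus irreducibility of $Y$ give $[p](Y) = Y$.

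The second step promotes $[p]$-invariance (plus density of torsion) to the coset conclusion. Let $H = \{g\in\G_\m^d : gY=Y\}$ and pass to the quotient torus $\bar\T := \G_\m^d/H^0$; writing $\bar Y$ for the image of $Y$, it suffices to show $\dim\bar Y = 0$. Torsion is still dense in $\bar Y$, and the stabilizer of $\bar Y$ in $\bar\T$ is now finite. Suppose for contradiction that $\dim\bar Y \geq 1$. Using density of torsion and finiteness of the stabilizer, one extracts a \emph{strict} sequence of torsion points $\zeta_n \in \bar Y$, meaning no infinite subsequence lies in a proper torsion coset of $\bar\T$. Bilu's equidistribution theorem then asserts that the uniform probability measures on the Galois orbits of $\zeta_n$ converge weakly to normalized Haar measure on the compact real torus $(S^1)^{\dim\bar\T}\subset \bar\T(\C)$. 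But each orbit measure is supported in $\bar Y(\C)$, so Haar measure must be supported in $\bar Y(\C)\cap(S^1)^{\dim\bar\T}$, forcing $\bar Y(\C)\supseteq(S^1)^{\dim\bar\T}$ and hence $\bar Y = \bar\T$ by Zariski density, contradicting $\dim\bar Y < \dim\bar\T$. So $\dim\bar Y = 0$, meaning $Y$ is a single coset of $H^0$; since $Y$ contains a torsion point by hypothesis, it is a torsion translate.

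\textbf{Main obstacle.} The deepest input is Bilu's equidistribution theorem itself, whose proof rests on Arakelov-geometric and adelic tools — substantially heavier machinery than the elementary Galois argument of the first step. A further subtlety is extracting the strict sequence: if almost all torsion of $\bar Y$ happens to lie inside a proper torsion coset $C\subsetneq\bar\T$, then strictness fails, and one must recurse on $\bar Y\cap C$ (running a secondary induction on the dimension of the smallest torsion coset of $\bar\T$ containing $\bar Y$). Laurent's original 1984 proof avoided equidistribution altogether, working $p$-adically: he linearized the $[p]$-action near a torsion point via the $p$-adic logarithm and applied rigidity of $p$-adic analytic subvarieties to conclude that $[p]$-invariance together with density of torsion forces a coset structure.
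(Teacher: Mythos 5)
First, a structural point: the paper does not prove this statement at all. It is quoted, with attribution, as Laurent's classical theorem (the Manin--Mumford problem for algebraic tori, 1984) and is used purely as a black box to deduce Proposition~\ref{prop:DMM_mono_on_torus} and Theorem~C. So there is no proof in the paper to compare yours against; what you have written is a sketch of a proof of Laurent's theorem itself, which is a substantial result in its own right and well beyond what the paper undertakes.

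Judged as a sketch, your overall architecture (Galois action forces $[p]$-invariance; equidistribution upgrades dense torsion to a coset) is a recognized route, but Step~1 has a genuine gap. You need the torsion of order coprime to $p$ to remain Zariski dense in $Y$, and you argue this by saying that torsion of $p$-power order ``lies in a countable union of proper closed algebraic subgroups and cannot be dense in the irreducible positive-dimensional $Y$.'' This fails twice over. First, the complement of the prime-to-$p$ torsion inside the torsion set consists of the points of order \emph{divisible} by $p$, not of $p$-power order, so excluding $p$-power torsion would not suffice anyway. Second, a countable union of proper closed subsets of an irreducible variety can perfectly well be Zariski dense: the group $\mu_{p^\infty}^d$ \emph{is} Zariski dense in $\G_\m^d$, so when $Y$ is itself a subtorus (or a torsion translate of one by a $p$-power torsion point) your claim is simply false; and since the torsion locus is countable, no category-type argument can rescue the inference. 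Handling exactly this issue (via Hindry's lemma on the Galois action on torsion, or via Laurent's $p$-adic linearization) is where much of the real work in the classical proofs lives.

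The good news is that your Step~2, carried out carefully, does not need Step~1 at all. The standard Bilu-based proof inducts on the smallest torsion coset of $\G_\m^d$ containing $Y$ (this is precisely the recursion you flag as a ``subtlety''), extracts a strict sequence of torsion points by a diagonal argument over the countably many proper algebraic subgroups meeting $Y$ in a non-dense set, and then concludes from equidistribution that $Y$ must fill out that coset. If you delete Step~1 and write out the strict-sequence extraction and the induction in full, you obtain a complete (if anachronistic, since Bilu's theorem postdates Laurent's) proof. For the purposes of this paper, however, the citation is the appropriate treatment, and your energy is better spent on the parts of the argument the paper actually proves.
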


In the general case where $\PrePer(\G^d_\m(\C))=G^\div$ for a nontrivial algebraic group $G$, since $G^\div=(G_0)^\div$ for the identity component $G_0$, we can assume $G$ is connected. 

We consider  the quotient group $\G^d_\m(\C)/G$. The quotient is in itself an algebraic torus, and the quotient map $\G^d_\m(\C)\to \G^d_\m(\C)/G$ is a monomial map. Concretely, let $L\subset M$ be the saturated sublattice corresponds to $G$, i.e., $G=\G_L$, and let $\aa_1,\cdots,\aa_{d-\ell}$ be a basis of $L$ (here $\ell=\dim G$ as a variety over $\C$), then we define the monomial map $\psi_\sigma$ as follows
\begin{align*}
\pi:\  \G_m^d(\C) &\longrightarrow\G_\m^{d-\ell}(\C) \\
        P  &\longmapsto (P^{\aa_1},\cdots,P^{\aa_{d-\ell}}).
\end{align*}
The map $\pi$ is surjective with kernel $G$. Thus $\G^d_\m(\C)/G\cong \G_\m^{d-\ell}(\C)$ and $\pi$ is the quotient map.

As a consequence of Laurent's Theorem, we will prove the following.

\begin{prop}
\label{prop:DMM_mono_on_torus}
Suppose that $Y\subset O_\sigma$ is an irreducible subvariety, then $Y$ contains a Zariski dense set of preperiodic points of $\varphi$ if and only if $\pi(Y)$ is a torsion translate of an algebraic subgroup of $\G_\m^{d-\ell}(\C)$.
\end{prop}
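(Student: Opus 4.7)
The plan is to reduce the question to Laurent's theorem via the observation that the preperiodic set $G^\div$ of $\varphi$ coincides with $\pi^{-1}(\T_{\tor})$, where $\T_{\tor}\subset\G_\m^{d-\ell}(\C)$ denotes the torsion subgroup. This is immediate: since $\ker\pi=G$, a point $P$ lies in $G^\div$ (i.e.\ $P^k\in G$ for some $k\ge 1$) if and only if $\pi(P)^k=1$ for some $k$, which is to say $\pi(P)\in\T_{\tor}$. Hence $Y$ contains a Zariski dense set of preperiodic points of $\varphi$ if and only if $Y\cap\pi^{-1}(\T_{\tor})$ is Zariski dense in $Y$.

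For the forward direction, since $\pi|_Y\colon Y\to\pi(Y)$ is dominant (by the very definition of $\pi(Y)$ as the image closure), the image of a Zariski dense subset of $Y$ is Zariski dense in $\pi(Y)$. Thus the density of $Y\cap\pi^{-1}(\T_{\tor})$ in $Y$ forces $\pi(Y)\cap\T_{\tor}$ to be Zariski dense in $\pi(Y)$, and Laurent's theorem (applied to the irreducible variety $\pi(Y)$) then yields that $\pi(Y)$ is a torsion translate of an algebraic subgroup.

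For the reverse direction, assume $\pi(Y)=\zeta H$ is such a torsion translate; since $\pi(Y)$ is irreducible, we may take $H$ to be a connected subtorus of $\G_\m^{d-\ell}(\C)$. The torsion points are then Zariski dense in $\zeta H$ (its torsion contains $\zeta$ times the torsion of $H$, which is $\mu_\infty^{\dim H}$ and manifestly dense). The proof then reduces to the elementary density-transfer lemma: if $f\colon X\to B$ is a dominant morphism of irreducible varieties and $T\subset B$ is Zariski dense, then $f^{-1}(T)$ is Zariski dense in $X$. Applied to $\pi|_Y\colon Y\to\zeta H$ with $T=\zeta H\cap\T_{\tor}$, this gives the Zariski density of $Y\cap\pi^{-1}(\T_{\tor})$ in $Y$, as required.

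The main (still elementary) technical ingredient is the density-transfer lemma, which I would prove by contradiction: if $f^{-1}(T)$ were contained in a proper closed subset $Z\subsetneq X$, then $U=X\setminus Z$ would be a nonempty open set containing the generic point of $X$, so $f|_U$ remains dominant; hence $f(U)$ is constructible and dense in $B$, and therefore contains a nonempty open $V\subset B$; but $f(U)\cap T=\emptyset$ would force $T\subset B\setminus V$, a proper closed subset of $B$, contradicting the Zariski density of $T$.
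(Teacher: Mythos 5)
Your argument is correct, and the overall strategy coincides with the paper's: identify $\PrePer(\varphi)\cap\G_\m^d=G^\div$ with $\pi^{-1}(\T_{\tor})$, push forward for one implication (image of a dense set is dense, then invoke Laurent), and pull back density for the other. The forward direction is word-for-word the paper's. Where you diverge is the converse: the paper shows directly that no proper closed subset $Z\subsetneq Y$ can contain all preperiodic points of $Y$, splitting into the cases $\pi(Z)\ne\pi(Y)$ and $\pi(Z)=\pi(Y)$ and, in the latter case, comparing generic fiber dimensions of $\pi|_Y$ and $\pi|_Z$ and then specializing to a torsion point where $\dim Y_Q>\dim Z_Q$. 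You instead isolate a general density-transfer lemma (a dominant morphism of irreducible varieties pulls back dense subsets of the base to dense subsets of the source) and prove it via Chevalley's theorem on constructible images. Your route is cleaner and avoids both the case split and the fiber-dimension bookkeeping, at the cost of invoking constructibility; the paper's route is more hands-on and makes explicit use of the density of torsion points in $\pi(Y)$ at the fiber-selection step. Both are complete. One small point common to both write-ups: $\pi(Y)$ need not be closed, so Laurent's theorem and the translate statement should really be applied to the Zariski closure $\overline{\pi(Y)}$ (which you implicitly do by calling $\pi(Y)$ the image closure); this costs nothing since $\pi|_Y\colon Y\to\overline{\pi(Y)}$ is then dominant, which is all your lemma needs.
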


\begin{proof}
If $Y$ contains a Zariski dense set of preperiodic points of $\varphi$, then $\pi(Y)$ contains a dense set of torsion points of $\G_\m^{d-\ell}(\C)$. Hence by Laurent's Theorem, $\pi(Y)$ is a torsion translate of an algebraic subgroup of $\G_\m^{d-\ell}(\C)$.

Conversely, suppose that $\pi(Y)$ is a torsion translate of an algebraic subgroup of $\G_\m^{d-\ell}(\C)$, we need to show that the preperiodic points of $\varphi$ is dense in $Y$. First, notice that the set preperiodic points of $\varphi$ is the union of the fibers $\pi^{-1}(Q)$ over all torsion points $Q$ of  $\G_\m^{d-\ell}(\C)$.

For a closed subvariety $Z\subsetneq Y$ (in particular $\dim Z < \dim Y$), if $\pi(Z)\ne \pi(Y)$, then $\pi(Z)$ cannot contain all the torsion points in $\pi(Y)$. Thus, $Z$ does not contain all the preperiodic points of $Y$.

Now assume $\pi(Z)= \pi(Y)$, for $Q\in\pi(Y)$, denote $Y_Q:=\pi^{-1}(Q)\cap Y$ and $Z_Q:=\pi^{-1}(Q)\cap Z \subset Y_Q$. For a generic point $Q\in  \pi(Y)$, we have
\[
\dim Y_Q = \dim Y-\dim \pi(Y) > \dim Z-\dim \pi(Z) = \dim Z_Q.
\]
However, since the torsion points are Zariski dense in $\pi(Y)$,  there exists some torsion point $Q\in \pi(Y)$ such that $\dim Y_Q > \dim Z_Q$. Therefore, $Z$ cannot contain all the preperiodic points in $Y$. This completes the proof.
\end{proof}

For a monomial map $\varphi:X\dashrightarrow X$ on a toric variety, the restriction $\varphi|_{X_\varphi}:X_\varphi\to X_\varphi$ is a dominant morphism on the quasiprojective variety $X_\varphi$. If we write $X_\varphi$ as the disjoint union of $\T$-orbits $X_\varphi = \bigsqcup O_\sigma$, and suppose that $Y\subsetneq X$ is a closed irreducible subvariety, then $Y\cap O_\sigma$ is Zariski dense in $Y$ for a unique cone $\sigma$. Applying Proposition~\ref{prop:DMM_mono_on_torus} to the algebraic torus $O_\sigma$ and its closed subvariety $Y\cap O_\sigma$, we can conclude the following answer to the dynamical Manin-Mumford problem for monomial maps.

\begin{thmC}
Let $\varphi:X \dashrightarrow X$ be a monomial selfmap on a toric variety $X$. If we write $X_\varphi$ as a disjoint union of orbits $X_\varphi = \bigsqcup O_\sigma$, and write the set of preperiodic points of $\varphi$ as $\PrePer(\varphi)=\bigsqcup G_\sigma^\div$ for connected algebraic subgroups $G_\sigma$ of $O_\sigma$, as in Proposition~\ref{prop:preper_points_mono_maps}. 

For a closed irreducible subvariety $Y\subset X_\varphi$, let $\sigma$ be the unique cone such that $Y\cap O_\sigma$ is Zariski dense in $Y$, and let $\pi: O_\sigma\to O_\sigma / G_\sigma$ be the quotient map. Then the preperiodic points of $\varphi$ is Zariski dense in $Y$ if and only if $\pi(Y\cap O_\sigma)$ is a torsion translate of an algebraic subgroup of $O_\sigma$.
\hfill\qed
\end{thmC}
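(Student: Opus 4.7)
The plan is to reduce Theorem C to Proposition~\ref{prop:DMM_mono_on_torus}, which already handles the analogous statement on a single algebraic torus. The main point is that $Y \cap O_\sigma$ will play a double role: it is a closed irreducible subvariety of the torus $O_\sigma$, and it is simultaneously an open dense subset of $Y$. The first fact holds because $Y$ is closed in $X_\varphi$ and $O_\sigma \subset X_\varphi$; the second follows because $Y = \bigsqcup_\tau (Y \cap O_\tau)$ is a finite disjoint union of locally closed subsets of $Y$, and a dense locally closed subset of the irreducible variety $Y$ is automatically open. Hence $Y \setminus (Y \cap O_\sigma)$ is a proper closed subvariety of $Y$.

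Next I would reduce the density of preperiodic points in $Y$ to density in $Y \cap O_\sigma$. Writing $\PrePer(\varphi) \cap Y = (G_\sigma^{\div} \cap (Y \cap O_\sigma)) \sqcup \bigsqcup_{\tau \neq \sigma}(G_\tau^{\div} \cap (Y \cap O_\tau))$, the pieces with $\tau \neq \sigma$ lie in the proper closed set $Y \setminus (Y \cap O_\sigma)$, so cannot contribute to Zariski density in $Y$. Consequently, preperiodic points of $\varphi$ are Zariski dense in $Y$ if and only if $G_\sigma^{\div} \cap (Y \cap O_\sigma)$ is Zariski dense in $Y \cap O_\sigma$, using that $Y \cap O_\sigma$ is itself dense in $Y$.

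Finally I would apply Proposition~\ref{prop:DMM_mono_on_torus}, or rather the argument used in its proof, to the torus $O_\sigma$, the connected subgroup $G_\sigma$, the quotient $\pi : O_\sigma \to O_\sigma / G_\sigma$, and the closed irreducible subvariety $Y \cap O_\sigma$. The one point that will need care is that when $O_\sigma$ is preperiodic but not periodic, $G_\sigma$ arises as $\psi^{-1}(G_\tau)$ rather than as the preperiodic set of a monomial selfmap of $O_\sigma$ (see Section~\ref{section:preper_toric_var}); however, the proof of Proposition~\ref{prop:DMM_mono_on_torus} depends only on the identity $G_\sigma^{\div} = \pi^{-1}((O_\sigma / G_\sigma)_{\tor})$ together with Laurent's theorem on torsion points in subvarieties of the quotient torus, and both of these depend on $G_\sigma$ solely as a connected algebraic subgroup. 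The equivalence therefore goes through, yielding exactly the condition that $\pi(Y \cap O_\sigma)$ is a torsion translate of an algebraic subgroup of $O_\sigma / G_\sigma$; this minor book-keeping is the only point in the argument where one needs to be careful.
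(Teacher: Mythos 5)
Your proposal is correct and follows essentially the same route as the paper, which deduces Theorem C by applying Proposition~\ref{prop:DMM_mono_on_torus} to the torus $O_\sigma$ and the closed subvariety $Y\cap O_\sigma$. You in fact supply two details the paper leaves implicit --- the reduction of Zariski density in $Y$ to density in the open dense piece $Y\cap O_\sigma$, and the observation that the proof of Proposition~\ref{prop:DMM_mono_on_torus} only uses $G_\sigma^{\div}=\pi^{-1}\bigl((O_\sigma/G_\sigma)_{\tor}\bigr)$ together with Laurent's theorem, so it applies equally to preperiodic-but-not-periodic orbits --- both of which are accurate and worthwhile.
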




\begin{bibdiv}
\begin{biblist}

\bib{BGT}{article}{
   author={Bell, J. P.},
   author={Ghioca, D.},
   author={Tucker, T. J.},
   title={The dynamical Mordell-Lang problem for \'etale maps},
   journal={Amer. J. Math.},
   volume={132},
   date={2010},
   number={6},
   pages={1655--1675},
}

\bib{BGT_book}{book}{
   author={Bell, Jason P.},
   author={Ghioca, Dragos},
   author={Tucker, Thomas J.},
   title={The dynamical Mordell-Lang conjecture},
   series={Mathematical Surveys and Monographs},
   volume={210},
   publisher={American Mathematical Society, Providence, RI},
   date={2016},
   pages={xiii+280},
   isbn={978-1-4704-2408-4},
}

\bib{Dang}{article}{
title={Degrees of Iterates of Rational Maps on Normal Projective Varieties},
author={Dang, Nguyen-Bac},
eprint={arXiv:1701.07760 [math.AG]}
}

\bib{DF}{article}{
title={The dynamical Manin-Mumford problem for plane polynomial automorphisms},
author={Romain Dujardin},
author={Charles Favre},
eprint={arXiv:1405.1377 [math.NT]}
}

\bib{Fulton}{book}{
   author={Fulton, William},
   title={Introduction to toric varieties},
   series={Annals of Mathematics Studies},
   volume={131},
   publisher={Princeton University Press, Princeton, NJ},
   date={1993},
   pages={xii+157},
}

\bib{GNY}{article}{
title={The Dynamical Manin-Mumford Conjecture and the Dynamical Bogomolov Conjecture for split rational maps},
author={Dragos Ghioca},
author={Khoa D. Nguyen},
author={Hexi Ye},
eprint={arXiv:1511.06081 [math.NT]}
}

\bib{HP}{article}{
title={Degree-growth of monomial maps},
author={Hasselblatt, Boris},
author={Propp, James},
journal={Ergodic Theory Dynam. Systems},
volume={27},
number={5},
date={2007},
pages={1375--1397}
eprint={arXiv:0604521 [math.DS]}
}

\bib{KaSi}{article}{
   author={Kawaguchi, Shu},
   author={Silverman, Joseph H.},
   title={On the dynamical and arithmetic degrees of rational self-maps of
   algebraic varieties},
   journal={J. Reine Angew. Math.},
   volume={713},
   date={2016},
   pages={21--48},
   issn={0075-4102},
}

\bib{Sil}{article}{
   author={Silverman, Joseph H.},
   title={Dynamical degree, arithmetic entropy, and canonical heights for dominant rational self-maps of projective space},
   journal={Ergodic Theory Dynam. Systems},
   volume={34},
   date={2014},
   number={2},
   pages={647--678},
   issn={0143-3857},
}

\bib{Xie}{article}{
title={The Dynamical Mordell-Lang Conjecture for polynomial endomorphisms of the affine plane},
author={Xie, Junyi},
eprint={arXiv:1503.00773 [math.AG]}
}

\end{biblist}
\end{bibdiv}

\end{document}